\DeclareFontFamily{OT1}{pzc}{}
\DeclareFontShape{OT1}{pzc}{m}{it}%
             {<-> s * [1.195] pzcmi7t}{}
\DeclareMathAlphabet{\mathscr}{OT1}{pzc}%
                                 {m}{it}
\newcommand{\Spec}{\operatorname{Spec}}
\newcommand{\isomto}{{\;\stackrel{\sim}{\longrightarrow}\;}}
\renewcommand{\Vec}{\mathbf{Vec}}
\newcommand{\Filt}{\mathbf{Filt}}
\newcommand{\Rep}{\mathbf{Rep}}
\newcommand{\Q}{{\mathbb Q}}
\newcommand{\Z}{{\mathbb Z}}
\newcommand{\aone}{{{\mathbb A}^1}}
\newcommand{\pone}{{{\mathbb P}^1}}
\newcommand{\ga}{{{\mathbb G}_{a}}}
\newcommand{\gm}{{{\mathbb G}_{m}}}
\newcommand{\PGL}{{\mathrm{PGL}}}
\newcommand{\GL}{{\mathrm{GL}}}
\newcommand{\G}{{\mathrm{G}}}
\newcommand{\B}{{\mathrm{B}}}
\newcommand{\U}{{\mathrm{U}}}
\renewcommand{\H}{{\mathrm{H}}}
\renewcommand{\L}{{\mathrm{L}}}
\newcommand{\A}{{\mathrm{A}}}
\newcommand{\X}{{\mathrm{X}}}
\newcommand{\T}{{\mathrm{T}}}
\newcommand{\Lie}{{\mathrm{Lie}}}
\newcommand{\N}{{\mathcal N}}
\theoremstyle{plain}
\newtheorem{thm}{Theorem}[section]
\newtheorem{cor}[thm]{Corollary}
\newtheorem{prop}[thm]{Proposition}
\newtheorem{claim}[thm]{Claim}
\newtheorem*{thm*}{Theorem}
\newtheorem*{problem*}{Problem}
\theoremstyle{definition}
\newtheorem{defn}[thm]{Definition}
\newtheorem{notation}[thm]{Notation}
\theoremstyle{remark}
\newtheorem{rem}[thm]{Remark}
\newtheorem{ex}[thm]{Example}
\newtheorem{nex}[thm]{Non-example}
\numberwithin{equation}{section}
\newcommand{\shrinkmargins}[1]{
  \addtolength{\textheight}{#1\topmargin}
  \addtolength{\textheight}{#1\topmargin}
  \addtolength{\textwidth}{#1\oddsidemargin}
  \addtolength{\textwidth}{#1\evensidemargin}
  \addtolength{\topmargin}{-#1\topmargin}
  \addtolength{\oddsidemargin}{-#1\oddsidemargin}
  \addtolength{\evensidemargin}{-#1\evensidemargin}
  }
\begin{document}
\pagestyle{fancy}
\renewcommand{\sectionmark}[1]{\markright{\thesection\ #1}}
\fancyhead{} \fancyhead[LO,RE]{\bfseries\footnotesize\thepage}
\fancyhead[LE]{\bfseries\footnotesize\rightmark}
\fancyhead[RO]{\bfseries\footnotesize\rightmark}
\lhead[odd]{\scriptsize\leftmark}
\rhead[\arabic{page}]{\footnotesize\arabic{page}} \chead[]{}
\cfoot[]{} \setlength{\headheight}{1cm}

\title{{\bf Equivariant sheaves on some spherical varieties}}
\author{Aravind Asok \thanks{Aravind Asok was partially supported by National Science Foundation Award DMS-0900813.}\\ \begin{footnotesize}Department of Mathematics\end{footnotesize} \\ \begin{footnotesize}University of Southern California\end{footnotesize} \\ \begin{footnotesize}Los Angeles, CA 90089 \end{footnotesize} \\ \begin{footnotesize}\url{asok@usc.edu}\end{footnotesize}\\
\and James Parson\\ \begin{footnotesize}Department of Mathematics\end{footnotesize} \\ \begin{footnotesize}Hood College\end{footnotesize} \\ \begin{footnotesize}Frederick, MD 21701\end{footnotesize} \\
\begin{footnotesize}\url{parson@hood.edu}\end{footnotesize}}

\maketitle

\section{Introduction}
In this announcement we describe categories of equivariant vector bundles on certain ``spherical" varieties. Our description is in linear-algebra terms: vector spaces equipped with filtrations, group and Lie-algebra actions, and linear maps preserving these structures.  The two parents of our description are i) the description of $\G$-equivariant vector bundles on homogeneous spaces $\G/\H$ as $\H$-representations and ii) Klyachko's description of the category of equivariant vector bundles on a toric variety ({\em cf.} \cite{Klyachko}) in terms of vector spaces equipped with families of filtrations satisfying a compatibility condition.  Results in this spirit were first presented by Kato ({\em cf.} \cite{Kato}), and we recover and extend his results in several directions. Detailed proofs of our results will appear in \cite{AsokParson}.

We begin with an illustrative example, whose description prefigures the general situation. Let $k$ be a field of characteristic $0$. Working in the category of $k$-schemes, let $X = \pone \times \pone$ equipped with the diagonal (left) action of $\G = \PGL_2$. Let $\T$ be the stabilizer in $\G$ of the point $x$ with bihomogeneous coordinates $([1:1],[0:1])$. Let $\Vec^\G(X)$ denote the category of $\G$-equivariant vector bundles on $\X$. Let $\Filt^\T(k)$ be the category whose objects are triples $(V,\rho,F^\bullet)$, where $V$ is a finite-dimensional $k$-vector space, where $\rho$ is a representation of $\T$ on $V$, and where $F^\bullet$ is a decreasing finite filtration on $V$. Morphisms in $\Filt^\T(k)$ are $\T$-module homomorphisms compatible with the filtrations.

\begin{thm}
\label{thm:example}
An appropriate restriction functor provides an equivalence between
\begin{itemize}
\item the category $\Vec^\G(X)$ of $\G$-equivariant vector bundles on $X$, and
\item the full subcateory $\Filt^\T(k)$ spanned by objects satisfying the compatibility condition
\begin{itemize}
\item[{\bf{(C)}}] the induced action of $\Lie(\T)$ on $V$ by $d\rho$ sends $F^i(V)$ to $F^{i-1}(V)$.
\end{itemize}
\end{itemize}
\end{thm}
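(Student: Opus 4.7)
\emph{Definition of the functor.} The $\G$-orbit decomposition of $X$ is $X = (\G \cdot x) \sqcup \Delta$, where $\G \cdot x$ is open and isomorphic to $\G/\T$ and $\Delta$ is closed and isomorphic to $\G/\B$ for a Borel $\B \supset \T$. Given $\mathcal{E} \in \Vec^\G(X)$, the fiber $V := \mathcal{E}_x$ is naturally a $\T$-representation $\rho$, and the classical equivalence $\Vec^\G(\G/\T) \simeq \Rep(\T)$ shows $(V, \rho)$ determines $\mathcal{E}|_{\G \cdot x}$. To extract the filtration $F^\bullet$, I would restrict $\mathcal{E}$ to a $\T$-stable rational curve $C \subset X$ passing through $x$ and meeting $\Delta$ transversely at a second $\T$-fixed point $x'$; concretely one may take $C$ to be the closure of the orbit of $x$ under a root subgroup of $\G$ normalized by $\T$. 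Then $\mathcal{E}|_C$ is a $\T$-equivariant bundle on a toric $\pone$, and Klyachko's description assigns to it a pair of filtrations on a common vector space, one at each $\T$-fixed point. The filtration at $x$ recovers the grading of $V$ by $\T$-weights, while the filtration at $x'$, transported to $V$ via the canonical identification with the Klyachko vector space, produces $F^\bullet$.

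\emph{Compatibility (C) and a quasi-inverse.} The infinitesimal $\G$-action on $\mathcal{E}$ produces a Lie-algebra action of $\mathfrak{g} = \Lie(\G)$ on the formal neighborhood of $x$ extending $d\rho \colon \mathfrak{t} \to \mathrm{End}(V)$. A root element $f \in \mathfrak{u}_{-\alpha}$ in a direction transverse to $C$ can be analyzed through the Rees realization of $\mathcal{E}|_C$, and one checks that $f$ shifts $F^i$ into $F^{i-1}$; combining this with the bracket relation $[\mathfrak{t}, \mathfrak{u}_{-\alpha}] \subset \mathfrak{u}_{-\alpha}$ controls the interaction of $d\rho(\mathfrak{t})$ with the filtration and produces condition (C). Conversely, given $(V, \rho, F^\bullet)$ satisfying (C), I would build $\mathcal{E}|_{\G \cdot x}$ from $(V, \rho)$ via the classical equivalence, build $\mathcal{E}|_C$ from $(V, \rho, F^\bullet)$ via Rees, and use (C) to define a $\mathfrak{g}$-action on $V$ in which $\mathfrak{u}_{-\alpha}$ acts by the Rees shift. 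Condition (C) is exactly what makes the resulting $\mathfrak{g}$-module a well-defined representation, which then integrates (via the $\G$-translate construction along the orbit of $C$) to a $\G$-equivariant bundle gluing to $\mathcal{E}|_{\G \cdot x}$.

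\emph{Main obstacle.} The delicate step is proving that (C) captures precisely the obstruction to extending the $\T$-equivariant bundle $\mathcal{E}|_C$ to a $\G$-equivariant bundle on $X$, rather than just to a $\T$-equivariant one on $X$. This requires a careful local analysis of the formal neighborhood of $\Delta$ and a comparison of the Rees realization with the transverse geometry to the closed orbit; once this is in hand, fully faithfulness and essential surjectivity of $R$ follow by direct bookkeeping.
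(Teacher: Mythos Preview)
Your approach has a structural gap in the very definition of the functor. You take $V=\mathcal E_x$ with its $\T$-representation structure and propose to extract $F^\bullet$ from the $\T$-equivariant bundle $\mathcal E|_C$ via Klyachko. But in Klyachko's picture the underlying vector space is the \emph{generic} fiber of $\mathcal E|_C$, whereas your $V$ is the fiber at the $\T$-\emph{fixed} point $x$; these are related only through passage to the associated graded, so there is no canonical identification along which to ``transport'' the filtration at $x'$ back to $V$. More seriously, because a single torus $\T$ is being used to produce both $\rho$ and $F^\bullet$, any filtration extracted from $\T$-equivariant data on $C$ is automatically $\T$-stable. Condition \textbf{(C)} asks only that $d\rho(\Lie(\T))\cdot F^i\subset F^{i-1}$, which is strictly weaker: take $V=k^2$ with $\T\cong\gm$ acting with weights $1,0$ on $e_1,e_2$, and set $F^0=V$, $F^1=k(e_1+e_2)$, $F^2=0$. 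This triple satisfies \textbf{(C)} (the generator of $\Lie(\T)$ sends $e_1+e_2$ to $e_1\in F^0$) but $F^1$ is not a $\T$-submodule, so it cannot arise from your construction and your functor cannot be essentially surjective. Your derivation of \textbf{(C)} via a ``transverse'' root element $f\in\mathfrak u_{-\alpha}$ and the relation $[\mathfrak t,\mathfrak u_{-\alpha}]\subset\mathfrak u_{-\alpha}$ is also off: the root subgroup whose orbit closure is $C$ is \emph{tangent} to $C$, not transverse, and the bracket relation you cite would at best show that a root element respects the filtration, not that $\mathfrak t$ shifts it down by one.

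The paper avoids all of this by introducing a \emph{second} torus. Its slice $\aone\hookrightarrow X_\B$ has the same image as one of your candidate curves $C$, but it carries the $\gm$-action coming from $t\mapsto\left(\begin{smallmatrix} t & 0\\ 0 & 1\end{smallmatrix}\right)\in\G$, for which the marked point $x$ sits at $1\in\aone$, a \emph{non}-fixed point. The filtration on $V$ then comes from the Rees equivalence $\Vec^{\gm}(\aone)\simeq\Filt(k)$ via the fiber at $1$. The $\T$-representation on the same $V$ has a different origin: the induced groupoid on the slice is shown to split as a semi-direct product $\gm\ltimes\N$ for a smooth group scheme $\N$ over $\aone$ whose fiber over $1$ is $\T$ and whose fiber over $0$ degenerates to $\ga$. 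Condition \textbf{(C)}, with its characteristic shift by $-1$, is exactly the statement that $\Lie(\N)$, as a $\gm$-equivariant line bundle on $\aone$, has its filtration jump in degree $-1$; this encodes the degeneration $\T\rightsquigarrow\ga$ and is invisible if one works only with $\T$-equivariant data on $C$.
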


\subsubsection*{Overview}
Section \ref{s:groupoids} fixes some notation.  Section \ref{s:example} contains an outline of our proof of Theorem \ref{thm:example}.  Section \ref{s:generalization} contains our proposed generalization of Theorem \ref{thm:example} to toroidal spherical varieties. The crucial points are Definition \ref{def:neutralizable} and Theorem \ref{thm:main}. Section \ref{s:remarks} indicates some specific cases to which our generalization applies, including those from \cite{Kato}.

\section{Groupoids, equivariance, and descent}
\label{s:groupoids}
We use the language of groupoids in schemes to describe our methods.  Let $k$ be a field. In what follows, all schemes will be $k$-schemes, and so we write \emph{$k$-scheme} and \emph{scheme} interchangeably. For a $k$-group scheme $\G$, a $\G$-scheme $X$ is a $k$-scheme equipped with a left $\G$-action.

\begin{defn}
  A $k$-groupoid scheme ${\mathcal G}$, or simply a {\em groupoid}, consists of two $k$-schemes ${\mathcal G}_0$ (the object scheme) and ${\mathcal G}_1$ (the morphism scheme), two morphisms $s,t: {\mathcal G}_1 \to {\mathcal G}_0$ called source and target, an identity map $e: {\mathcal G}_0 \to {\mathcal G}_1$, an inversion map $i: {\mathcal G}_1 \to {\mathcal G}_1$, and a composition map $m: {\mathcal G}_1 \times_{s,{\mathcal G}_0,t} {\mathcal G}_1 \to {\mathcal G}_1$, satisfying a collection of axioms.  We also write $p_1$ and $p_2$ for the two projections ${\mathcal G}_1 \times_{{\mathcal G}_0} {\mathcal G}_1 \to {\mathcal G}_1$.
\end{defn}

\begin{ex}[Group scheme]
Let $\mathcal{G}$ be a groupoid with $s = t$. The composition map $m$ along with the other groupoid structures then make $\mathcal{G}_1$ a group scheme over $\mathcal{G}_0$. We generally refer to a groupoid such as $\mathcal{G}$ simply as a group scheme.
\end{ex}

A morphism $f$ of $k$-groupoid schemes is a pair $(f_0,f_1)$ of morphisms $f_i:\mathcal{G}_i\to \mathcal{G}'_i$ of $k$-schemes that is compatible with the identities and composition.
We say that a morphism of groupoids $f: {\mathcal X} \to {\mathcal Y}$ is {\em fully faithful} if the square
\[
\xymatrix{
{\mathcal X}_1 \ar[r]^{f_1}\ar[d]^{(s,t)} & {\mathcal Y}_1 \ar[d]^{(s,t)} \\
{\mathcal X}_0 \times {\mathcal X}_0 \ar[r]^{(f_0,f_0)} & {\mathcal Y}_0 \times {\mathcal Y}_0
}
\]
is cartesian.

We will use the following groupoid constructions repeatedly.

\begin{ex}[Action groupoid]
For a $\G$-scheme $X$, we write $\G \ltimes X$ for the action groupoid associated with the $\G$-action on $X$.  The object scheme of $\G \ltimes X$ is $X$ itself, and the morphism scheme of $\G \ltimes X$ is $\G \times X$.  The source and target maps $\G \times X \to X$ are, respectively, the projection onto $X$ and the action morphism.  The groupoid composition $(\G \times X) \times_{X} (\G \times X) \to \G \times X$ is provided by the multiplication map $\G \times \G \times X \to \G \times X$.
\end{ex}

\begin{ex}[Semi-direct product groupoid]
The semi-direct-product (or diagonal) construction generalizes the above action groupoid construction. Let $\G$ be a group scheme, and let $\mathcal{X}$ be a groupoid in the category of $\G$-schemes. That is, $\mathcal{X}_0$ and $\mathcal{X}_1$ are equipped with actions of $\G$, and all of the morphisms defining the groupoid structure are $\G$-equivariant. We then have a groupoid $\G\ltimes \mathcal{X}$ with object scheme $\mathcal{X}_0$ and morphism scheme $\G\times \mathcal{X}_1$. The source morphism $G\times \mathcal{X}_1\to \mathcal{X}_0$ is the composition of projection onto $\mathcal{X}_1$ with the source morphism of $\mathcal{X}$. The target morphism is the composition of the $\G$-action $\G\times \mathcal{X}_1\to\mathcal{X}_1$ with the target morphism of $\mathcal{X}$. Composition of morphisms in $\G\ltimes \mathcal{X}$ is defined as for semi-direct products of groups: for $(g,x)$ and $(h,y)$ in $\G\times \mathcal{X}_1$ with $s(x) = t(hy) = ht(y)$, we let $(g,x)\circ (h,y) = (gh,h^{-1}x\circ y)$.
\end{ex}

\begin{ex}[Induced groupoid]
 Let ${\mathcal X}$ be a $k$-groupoid scheme, and let $i_0: Z \to {\mathcal X}_0$ be a morphism.  There is a {\em groupoid induced by ${\mathcal X}$ and $i_0$}, consisting of a groupoid ${\mathcal Z}$ and a morphism of groupoids $i: {\mathcal Z} \to {\mathcal X}$ (where ${\mathcal Z}_0 = Z$) with the property that $i$ is fully faithful.  More explicitly, define ${\mathcal Z}_1$ as the pullback of the diagram
\[
(Z \times Z) \stackrel{i_0 \times i_0}{\longrightarrow} {\mathcal X}_0 \times {\mathcal X}_0 \stackrel{(s,t)}{\longleftarrow} {\mathcal X}_1;
\]
the groupoid structures are inherited from those of ${\mathcal X}$. We also write $\mathcal{X}|_Z$ or $\mathcal{X}|_{i_0}$ for this induced groupoid.
\end{ex}

\begin{defn}
A morphism $f: {\mathcal X} \to {\mathcal Y}$ of $k$-groupoid schemes is an {\em (fpqc)-equivalence} if it is fully faithful and locally essentially surjective in the fqpc topology.
\end{defn}

The ``locally essentially surjective'' condition means the following: let $\mathcal{X}_0' = \mathcal{X}_0 \times_{f_0,s} \mathcal{Y}_1$, which parameterizes pairs $(x,\phi:f_0(x)\to y)$ of an object $x$ of $\mathcal{X}$ and a morphism $\phi:f_0(x)\to y$ in $\mathcal{Y}$. The composition of the projection $\mathcal{X}_0'\to \mathcal{Y}_1$ with the target morphism $\mathcal{Y}_1\to \mathcal{Y}_0$ defines a morphism $f_0':\mathcal{X}_0'\to \mathcal{Y}_0$. To say that $f$ is locally essentially surjective is to say that there is an fpqc cover of $\mathcal{Y}_0$ over which $f_0'$ has a section, i.e., fpqc locally on $\mathcal{Y}_0$, every object is isomorphic to the image of an object of $\mathcal{X}_0$. In the cases that we consider below either $f_0'$ itself will have a section, or it will have a section Zariski locally on $\mathcal{Y}_0$.

\begin{ex}
\label{ex:transitive}
Let $\mathcal{X}$ be a groupoid, and let $x:\Spec(k)\to \mathcal{X}_0$. Note that $\mathcal{X}|_x$ is a ($k$-scheme) groupoid over $\Spec(k)$ and thus simply a group scheme over $\Spec(k)$, namely $\H \ltimes\Spec(k)$, where $\H$ is the stabilizer of $x$ in $\mathcal{X}$. Suppose that $\mathcal{X}$ is transitive in the sense that $t:s^{-1}(x) \to \mathcal{X}_0$ has a section fpqc locally on $\mathcal{X}_0$, i.e.,  every object in $\mathcal{X}$ is fpqc locally isomorphic to $x$. Then $\H\ltimes \Spec(k)\to \mathcal{X}$ is an equivalence by definition.
%
\end{ex}

The usual definition of equivariance for a vector bundle on a scheme equipped with a group action can be extended to equivariance for groupoids.

\begin{defn}
An \emph{equivariant vector bundle} on a groupoid $\mathcal{X}$ is a pair $(V,\varphi)$, where $V$ is a vector bundle on $\mathcal{X}_0$, and where $\varphi:s^*V\to t^*V$ is an isomorphism of vector bundles over $X_1$ that satisfies the cocycle condition $m^*\varphi = p_1^*\varphi \circ p_2^*\varphi$.
\end{defn}

A morphism $f:(V,\varphi)\to (V',\varphi')$ of equivariant vector bundles is a morphism $f:V\to V'$ of vector bundles on $\mathcal{X}_0$ satisfying $t^*f \circ\varphi = \varphi' \circ s^*f$.

\begin{notation}
The category of {\em equivariant vector bundles on ${\mathcal X}$} (or synonymously the category of {\em representations of ${\mathcal X}$}), denoted $\Vec(\mathcal{X})$, is the category whose objects are equivariant vector bundles on $\mathcal{X}$ and whose morphisms are as above.  Furthermore, we write $\Vec^\G(X)$ for $\Vec(\G\ltimes X)$ and $\Rep(\G)$ for $\Vec(\G\ltimes\Spec(k))$.
\end{notation}

Equivariant vector bundles pull back along morphisms of groupoids: let $f: {\mathcal X} \to {\mathcal X}'$ be a morphism of groupoids.  Given a representation $(V',\varphi')$ of ${\mathcal X}'$, we have the representation $f^*(V',\varphi') = (f_0^*V',f_1^*(\varphi'))$ of $\mathcal{X}$.  One can then apply descent techniques ({\em cf.}  \cite{Giraud} and \cite{SGA1}) to study the properties of such pullback functors of representations of groupoids; we use the following descent result:

\begin{prop}
\label{prop:descent}
 Let $f: {\mathcal X} \to {\mathcal X}'$ be an equivalence of groupoids.  The induced functor $f^*: \Vec({\mathcal X}') \to \Vec({\mathcal X})$ is fully faithful and essentially surjective.
\end{prop}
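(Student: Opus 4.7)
The plan is to use faithful descent for vector bundles along the fpqc cover of $\mathcal{X}'_0$ supplied by the local essential surjectivity hypothesis, together with the fact that full faithfulness of $f$ identifies morphisms in $\mathcal{X}_1$ between objects coming from $\mathcal{X}_0$ with the corresponding morphisms in $\mathcal{X}'_1$. Concretely, the hypothesis furnishes an fpqc cover $p : U \to \mathcal{X}'_0$ together with a lift $u : U \to \mathcal{X}_0$ and a morphism $\phi : U \to \mathcal{X}'_1$ satisfying $s\phi = f_0 u$ and $t\phi = p$; for any $(V',\varphi') \in \Vec(\mathcal{X}')$ the pulled-back equivariance $\phi^*\varphi' : u^* f_0^* V' \isomto p^* V'$ on $U$ will be the translation device between representations of $\mathcal{X}'$ and their pullbacks to $\mathcal{X}$.

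For full faithfulness of $f^*$, I would first handle injectivity on Hom-sets: if $h : (V',\varphi') \to (W',\psi')$ satisfies $f^* h = 0$, pulling back along $u$ and applying the equivariance relation $t^* h \circ \varphi' = \psi' \circ s^* h$ over $\phi$ gives $p^* h = 0$, so fpqc descent for morphisms of vector bundles forces $h = 0$. For surjectivity on Hom-sets, a morphism $g : f_0^* V' \to f_0^* W'$ in $\Vec(\mathcal{X})$ transports via $\phi^*\varphi'$ and $\phi^*\psi'$ to a morphism $g_U : p^* V' \to p^* W'$; the two pullbacks of $g_U$ to $U \times_{\mathcal{X}'_0} U$ differ by conjugation with $\phi_1 \circ \phi_2^{-1} \in \mathcal{X}'_1(U \times_{\mathcal{X}'_0} U)$, which lifts uniquely to an element of $\mathcal{X}_1$ by full faithfulness, and the equivariance of $g$ with respect to this lift is precisely the cocycle compatibility needed to descend $g_U$ to a morphism $h : V' \to W'$. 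The descended $h$ satisfies $f_0^* h = g$, and its equivariance over $\mathcal{X}'_1$ is verified by the same covering-and-lifting strategy applied over $\mathcal{X}'_1$.

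For essential surjectivity, starting with $(V,\varphi) \in \Vec(\mathcal{X})$ I would produce descent data for a prospective $V'$ along the cover $p : U \to \mathcal{X}'_0$. Full faithfulness of $f$ promotes $\phi_1 \circ \phi_2^{-1}$ on $U \times_{\mathcal{X}'_0} U$ to a morphism $\psi \in \mathcal{X}_1(U \times_{\mathcal{X}'_0} U)$, and $\psi^* \varphi$ supplies the required isomorphism $q_2^* u^* V \isomto q_1^* u^* V$; the cocycle condition on $U \times_{\mathcal{X}'_0} U \times_{\mathcal{X}'_0} U$ is inherited from the cocycle condition for $\varphi$, courtesy of the uniqueness clause in full faithfulness. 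Descent then produces $V'$ on $\mathcal{X}'_0$ with $f_0^* V' \cong V$, and an entirely parallel construction, on an appropriate fpqc cover of $\mathcal{X}'_1$ whose total space lifts through $\mathcal{X}_1$ by the same full-faithfulness argument, yields the equivariance $\varphi'$ with $f_1^* \varphi' = \varphi$ together with its cocycle condition.

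The main obstacle is the bookkeeping: one must keep track of several iterated fibre products and verify that cocycle conditions descend properly, both for the bundle $V'$ and for the equivariance $\varphi'$. Once the diagrams are laid out each verification is a formal consequence of the cocycle condition for $\varphi$ together with the uniqueness of lifts provided by the full faithfulness of $f$, so the proposition follows by combining these two descent constructions with the easier full-faithfulness argument.
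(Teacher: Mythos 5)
Your proposal is correct and is exactly the standard fpqc-descent argument that the paper itself invokes (it gives no proof here, deferring to \cite{Giraud} and \cite{SGA1} and to the detailed companion paper): use the cover $p:U\to\mathcal{X}'_0$ with its lift $(u,\phi)$, transport data via $\phi^*\varphi'$, and use the cartesian full-faithfulness square to lift the gluing morphisms uniquely to $\mathcal{X}_1$ so that the cocycle conditions descend. The only point worth flagging is that checking $f_0^*h=g$ (and the compatibility $f^*(V',\varphi')\cong(V,\varphi)$) requires working over the base-changed cover $\mathcal{X}_0\times_{f_0,\mathcal{X}'_0,p}U\to\mathcal{X}_0$ and invoking the equivariance of $g$ (resp.\ $\varphi$) along the unique lift of $\phi$, a step you subsume under ``bookkeeping'' but which is where the full-faithfulness hypothesis is genuinely used a second time.
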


\begin{rem}
The reader familiar with the theory of stacks will have no trouble reformulating our results in that language.  For example, fqpc equivalences of groupoids induce equivalences of associated stacks, and equivalences of stacks induce equivalences of associated categories of representations.
\end{rem}

\section{Equivalences of groupoids and local structure}
\label{s:example}
For this section, let $\G = \PGL_2$, and $X = \pone \times \pone$ equipped with the diagonal (left) $\G$-action.
To prove Theorem \ref{thm:example}, we will construct a groupoid ${\mathcal Y}$ equivalent to the action groupoid $\G \ltimes X$, but whose representations are easier to describe. The groupoid $\mathcal{Y}$ will decompose as a semi-direct product $\gm \ltimes \N$ for a $\gm$-equivariant group scheme $\N$ over $\aone$. This semi-direct-product decomposition allows us to describe representations of $\mathcal{Y}$ as $\gm$-equivariant representations of $\N$. We analyze such representations using a simple description of $\gm$-equivariant vector bundles on $\aone$ along with a bit of equivariant Lie theory for $\N$.

Let $a_{ij}$ ($1 \leq i$,$j \leq 2$) be homogeneous coordinates on $\G$ and and let $([x_0:x_1],[y_0:y_1])$ be bihomogeneous coordinates on $\pone \times \pone$.  Let $\B$ be the Borel subgroup of $\G$ defined by $a_{21} = 0$.  Let $x$ be the standard coordinate on $\aone$, and let $t$ be the standard coordinate on $\gm$.

\subsubsection*{Simplification: orbit structure and slicing}
Before we start our simplification of $G\ltimes X$, we adjust our groupoid by restricting to an open subscheme of $X$ adapted to $\B$. In our generalization below, the theory of spherical varieties dictates this preliminary step.

Let $X_\B$ be the open subscheme $\aone \times \aone$ of $X$ defined by the non-vanishing of $x_1$ and of $y_1$.  This subscheme $X_\B$ is stable under the action of $\B$. Let ${\mathcal X}$ be the groupoid induced by $\G \ltimes X$ and the open immersion $X_\B \to X$.  Explicitly, ${\mathcal X}_0 = X_\B$, and ${\mathcal X}_1$ is the open subscheme of $\G \times X$ defined by the non-vanishing of $x_1, y_1, a_{21}x_0+a_{22}x_1,$ and $a_{21}y_0 + a_{22}y_1$.  The additional groupoid structures are inherited from $\G \ltimes X$. The morphism $\mathcal{X}\to \G\ltimes X$ is fully faithful by construction, and it is Zariski-locally essentially surjective, since the $\G(k)$-translates of $X_B$ cover $X$. Thus we have:

\begin{prop}
\label{prop:firstequiv}
The morphism of groupoids ${\mathcal X} \to \G \ltimes X$ is an equivalence.
\end{prop}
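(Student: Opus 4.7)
The plan is to check separately the two conditions in the definition of an equivalence: full faithfulness and Zariski-local essential surjectivity of the morphism $\mathcal{X}\to \G\ltimes X$.

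Full faithfulness is formal. By the construction of the induced groupoid, $\mathcal{X}_1$ sits in a cartesian square
\[
\xymatrix{
\mathcal{X}_1\ar[r]\ar[d]_{(s,t)} & \G\times X\ar[d]^{(s,t)}\\
X_\B\times X_\B\ar[r] & X\times X,
}
\]
which is exactly the square that must be cartesian for full faithfulness of $\mathcal{X}\to \G\ltimes X$, so nothing further needs to be argued.

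For essential surjectivity, I would first unwind the definition given in Section \ref{s:groupoids}. Since the source morphism $s:\G\times X\to X$ of $\G\ltimes X$ is the projection, the scheme $\mathcal{X}_0'=X_\B\times_{X,s}(\G\times X)$ is canonically $\G\times X_\B$, and the structural map $f_0':\G\times X_\B\to X$ is the action morphism $(g,y)\mapsto g\cdot y$. For every $g\in \G(k)$, the formula $y\mapsto(g,g^{-1}\cdot y)$ then defines a Zariski section of $f_0'$ over the open subscheme $g\cdot X_\B\subseteq X$. It therefore suffices to exhibit finitely many elements $g_1,\ldots,g_n\in \G(k)$ whose translates $g_i\cdot X_\B$ cover $X$.

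The remaining step is a short combinatorial check, which I expect to be the only real content. The complement of $X_\B$ in $X$ is the union of two lines $(\{[1:0]\}\times\pone)\cup(\pone\times\{[1:0]\})$. Translating by the element $w\in \G(k)$ that exchanges $[0:1]$ and $[1:0]$ on $\pone$ yields a second open $w\cdot X_\B$ whose complement is the analogous union at $[0:1]$, so $X_\B\cup w\cdot X_\B$ misses only the two antidiagonal points $([1:0],[0:1])$ and $([0:1],[1:0])$. Any third element of $\PGL_2(k)$ moving $[1:0]$ to a point of $\pone$ distinct from both $[0:1]$ and $[1:0]$ --- for instance a nontrivial lower-triangular unipotent matrix --- produces an open translate containing both remaining points, completing the cover. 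No serious obstacle is expected: the only substantive ingredient is this elementary covering of $(\pone)^2$ by three translates of $\aone\times\aone$, which in the anticipated generalization to toroidal spherical varieties will presumably be subsumed by standard results on the cell decomposition afforded by open $\B$-orbits.
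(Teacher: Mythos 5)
Your proof is correct and follows exactly the route the paper takes: full faithfulness is automatic from the induced-groupoid construction, and local essential surjectivity reduces to covering $X$ by $\G(k)$-translates of $X_\B$, which the paper simply asserts and you verify explicitly with three translates. The explicit covering check (two antidiagonal points left after two translates, handled by a third) is a correct filling-in of the detail the paper leaves to the reader.
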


\paragraph*{Orbit-structure morphism.} The first piece of our simplification system is the \emph{orbit-structure morphism}: consider the morphism of schemes $X_\B \to \aone$ defined by the function $f_0 = \frac{x_0}{x_1} - \frac{y_0}{y_1}$.  We can extend this morphism to a morphism of groupoids $f: {\mathcal X} \to \gm \ltimes \aone$ by letting $f_1$ correspond to
\[
\left(
\frac{a_{11}a_{22}-a_{12}a_{21}}{(a_{21}\frac{x_0}{x_1}+a_{22})(a_{21}\frac{y_0}{y_1}+a_{22})},\frac{x_0}{x_1} - \frac{y_0}{y_1}
\right).
\]
We call $f: {\mathcal X} \to \gm \ltimes \aone$ the {\em orbit-structure morphism} since it classifies the $\G$-orbits on $X$ in terms of the $\gm$ orbits in $\aone$. Note that $X$ decomposes as the union of the ($\G$-stable) diagonal $\pone$ closed subscheme and its ($\G$-stable) complementary open subscheme. Each of these subschemes is homogeneous for the action of $\G$, and their intersections with $X_\B$ are the vanishing and non-vanishing loci of $f_0$, respectively. The orbit-structure morphism $(f_0,f_1)$ captures more about the action of $\G$ on $X$: for example, for any test scheme $Z$, two morphisms $x,y:Z\to X_\B$ are equivalent under $\G$ if and only if $f_0(x),f_0(y)\in\Gamma(Z,\mathcal{O}_Z)$ differ by an element of $\Gamma(Z,\mathcal{O}_Z)^\times$.

\paragraph*{Slicing.} The second piece our our simplification system is the \emph{slice}: consider the morphism of schemes $\aone \to X_\B \subset X$ defined by $\frac{x_0}{x_1} \mapsto x$ and $\frac{y_0}{y_1} \mapsto 0$, which is a \emph{slice} for the $\G$ action on $\X$. Let ${\mathcal Y}$ be the groupoid induced by $\G \ltimes X$ and the slice morphism $\aone \to X$. More explicitly, ${\mathcal Y}_0 = \Spec(k[x])$ and ${\mathcal Y}_1$ is the subscheme of $\G \times \aone$ where $a_{12} = 0$, and where $a_{11}$, $a_{22}$ and $\frac{a_{21}}{a_{11}}x + \frac{a_{22}}{a_{11}}$ are invertible. Thus $\mathcal{Y}_1 = \Spec(k[\gamma,\delta,x,\delta^{-1},(\gamma x + \delta)^{-1}])$ with $\gamma = \frac{a_{21}}{a_{11}}$ and $\delta = \frac{a_{22}}{a_{11}}$. The source morphism $\mathcal{Y}_1\to \mathcal{Y}_0 = \Spec(k[x])$ corresponds to $x\mapsto x$, and the target morphism corresponds to $x\mapsto (\gamma x+ \delta)^{-1}x$.

Let $\U$ be the subgroup of $\B$ defined by $a_{11} = a_{22}$. One finds by an easy computation that the action of $\U$ on $X_\B$ defines an isomorphism $\U\times\aone \to X_\B$, and so $\mathcal{Y}\to \mathcal{X}$ is essentially surjective. Combining this observation with Proposition \ref{prop:firstequiv}, we find:

\begin{prop}
The morphism ${\mathcal Y} \to \G \ltimes X$ is an equivalence.
\end{prop}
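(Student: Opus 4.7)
The plan is to factor the morphism as $\mathcal{Y}\to\mathcal{X}\to \G\ltimes X$ and invoke Proposition~\ref{prop:firstequiv}. Since the slice $\aone\to X$ lands in the open subscheme $X_\B\subset X$, the groupoid $\mathcal{Y}$ induced from $\G\ltimes X$ along the slice coincides with the one induced from $\mathcal{X}$ along the same morphism, yielding a canonical factorization $\mathcal{Y}\to\mathcal{X}\to\G\ltimes X$. The second arrow is an equivalence by Proposition~\ref{prop:firstequiv}, so it suffices to show that $\mathcal{Y}\to\mathcal{X}$ is an equivalence. Full faithfulness is automatic from the definition of an induced groupoid recalled in Section~\ref{s:groupoids}, so the real content is essential surjectivity.

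For essential surjectivity, I would verify the claim in the excerpt that the composite $\U\times\aone\to X_\B$ sending $(u,x)$ to $u\cdot\mathrm{slice}(x)$ is an isomorphism of schemes. In coordinates, $\U=\Spec k[\beta]$ with $a_{11}=a_{22}=1$, $a_{12}=\beta$, $a_{21}=0$, and the element $u(\beta)$ sends $([x:1],[0:1])$ to $([x+\beta:1],[\beta:1])$. In the affine coordinates $(x_0/x_1,y_0/y_1)$ on $X_\B=\aone\times\aone$ this becomes the map $(\beta,x)\mapsto(x+\beta,\beta)$, whose evident inverse $(a,b)\mapsto(b,a-b)$ exhibits the isomorphism. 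For each point $p=(a,b)\in X_\B$, the pair $(u(b),\mathrm{slice}(a-b))$ then provides a morphism in $\mathcal{X}_1$ from $\mathrm{slice}(a-b)$ to $p$, and hence a section of $f_0':\mathcal{Y}_0\times_{f_0,s}\mathcal{X}_1\to\mathcal{X}_0$ that is defined globally on $\mathcal{X}_0$ rather than merely fpqc-locally.

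Nothing in this argument is genuinely difficult: the slice was chosen precisely so that $\U$ acts freely and transitively on $X_\B$ in the transverse direction, and the only bookkeeping is to check that $(u(\beta),\mathrm{slice}(x))$ really defines an element of $\mathcal{X}_1\subset\G\times X_\B$. For this one needs the non-vanishing of $x_1$, $y_1$, $a_{21}x_0+a_{22}x_1$, and $a_{21}y_0+a_{22}y_1$ at the point in question, all of which are automatic because $a_{21}=0$ and $a_{22}=1$ in $\U$ and both components of the target point $([x+\beta:1],[\beta:1])$ have second homogeneous coordinate equal to $1$. Combined with full faithfulness and Proposition~\ref{prop:firstequiv}, this completes the proof.
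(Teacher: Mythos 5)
Your proof is correct and follows the paper's own route exactly: factor through $\mathcal{X}$, invoke Proposition~\ref{prop:firstequiv}, and establish essential surjectivity of $\mathcal{Y}\to\mathcal{X}$ via the isomorphism $\U\times\aone\isomto X_\B$ (the paper leaves this as "an easy computation," which you have carried out correctly, including the explicit inverse $(a,b)\mapsto(b,a-b)$ and the resulting global section of $f_0'$). Nothing to add.
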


The groupoid $\mathcal{Y}$ is what we will reduce to a semi-direct product. To make this reduction, we enrich the slice $\aone\to X$ to a groupoid morphism: let $\gm \to \G$ be the homomorphism given by $a_{11} \mapsto t$, $a_{12} \mapsto 0$, $a_{21} \mapsto 0$, $a_{22} \mapsto 1$.  The morphism $\aone\to X$ intertwines the standard action of $\gm$ on $\aone$ with the action of $\gm$ on $X$ via $\gm \to \G$. Thus we have a groupoid morphism $\gm\ltimes\aone\to \G\ltimes X$. By the construction of $\mathcal{Y}$, this morphism factors through $s:\gm \ltimes \aone \to \mathcal{Y}$. This enhanced slice morphism $s$ is a section of the orbit-structure morphism: $fs$ is the identity on $\gm \times \aone$.

\paragraph*{Splitting.} We split $\mathcal{Y}$ as a semi-direct product by combining the orbit-structure morphism $f$ and slice morphism $s$. The normal groupoid $\N$ in the semi-direct product is the kernel of the orbit-structure morphism: the restriction $f|_{\mathcal{Y}}$ is the identity on $\mathcal{Y}_0$, and on $\mathcal{Y}_1$, it corresponds to $t\mapsto (\gamma x + \delta)^{-1}$. Let $\N$ be the kernel of $f|_{\mathcal{Y}}$, so that $\N_0=\mathcal{Y}_0=\aone$, and $\N_1$ is cut out in ${\mathcal Y}_1$ by the equation $\gamma x + \delta = 1$. The groupoid structure on $\mathcal{Y}$ restricts to an $\aone$-group-scheme structure on $\N_1$. More explicitly, $\N_1 = \Spec(k[\gamma,\delta,x,\delta^{-1}]/(\gamma x + \delta -1))$. The group law on $\N_1$ is $(\gamma,\delta,x) \times (\gamma',\delta',x) = (\gamma + \delta \gamma',\delta \delta',x)$. The group scheme $\N_1$ is smooth over $\aone$ with connected fibers: the fiber over $x=0$ is $\ga$, and over $\aone\setminus 0$, the character $\delta$ defines an isomorphism from $\N_1$ to $\gm\times (\aone\setminus 0)$.

The groupoid $\N$ acquires a $\gm$-equivariant structure from $s$. The base $\N_0 = \aone$ has the standard action of $\gm$. To define the action of $\gm$ on $\N_1$, we use the identification $\gm\times \N_1 = (\gm \times \aone)\times_\aone \N_1$. The slice $s$ allows us to view $\gm \times \aone = (\gm \ltimes \aone)_1$ as morphisms in $\mathcal{Y}$, and conjugation in $\mathcal{Y}$ defines the action morphism $(\gm \times \aone)\times_\aone \N_1 \to \N_1$. After unwinding these definitions, one finds that the action $\gm\times \N_1\to \N_1$ is given by $\gamma\mapsto t^{-1}\gamma$, $\delta\mapsto \delta$, and $x\mapsto tx$.

Finally, $f$ and $s$ provide an isomorphism $\gm \ltimes \N\to \mathcal{Y}$. The morphism $(\gm \ltimes \N)_0 = \N_0 = \aone \to \mathcal{Y}_0 = \aone$ is the identity. The (iso)morphism $(\gm \ltimes \N)_1 = \gm \times \N_1 = (\gm\times \aone)\times_\aone \N_1 \to \mathcal{Y}_1$ is composition in $\N$. More explicitly, this isomorphism corresponds to the $k$-algebra homomorphism
\[
k[\gamma,\delta,x,\delta^{-1},(\gamma x + \delta)^{-1}]\to k[t,t^{-1}]\otimes_k k[\gamma,\delta,x]/(\gamma x + \delta - 1) = k[t,t^{-1},\gamma,\delta,x]/(\gamma x + \delta - 1)
\]
defined by $x\mapsto x$, $\gamma\mapsto t^{-1}\gamma$, and $\delta\mapsto t^{-1}\delta$. In summary, we have:

\begin{prop}
The orbit-structure and slice morphisms define a groupoid isomorphism $\gm \ltimes \N \isomto {\mathcal Y}$.
\end{prop}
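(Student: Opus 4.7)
The plan is to construct the isomorphism $\Phi: \gm \ltimes \N \isomto \mathcal{Y}$ directly from the enriched slice $s$ and the inclusion $\N \hookrightarrow \mathcal{Y}$, and then to verify it is an isomorphism by exhibiting its inverse on coordinate rings.

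On object schemes, both sides are $\aone$ and $\Phi_0$ is the identity. On morphism schemes, for $(t,n) \in (\gm \ltimes \N)_1 = \gm \times \N_1$, I set $\Phi_1(t,n) := s(t) \circ n \in \mathcal{Y}_1$. This composition is well defined because $n$ is a loop at its fiber point in $\aone$ (as $\N$ is a group scheme over $\aone$, so its source and target coincide) and the source of $s(t)$ agrees with this fiber point under the identification $\gm \times \N_1 = (\gm \times \aone) \times_{\aone} \N_1$. Compatibility of $\Phi$ with source, target, and identity morphisms then follows by unwinding the definitions of the semi-direct-product groupoid structure and of the $\gm$-action on $\N_0 = \aone$.

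The main substantive check is compatibility with composition. Using the semi-direct-product composition formula $(t,n) \circ (t',n') = (tt',\ (t')^{-1}\!\cdot\! n \circ n')$ together with the fact that $s$ is a groupoid morphism, this reduces to the identity
\[
n \circ s(t') \;=\; s(t') \circ \bigl((t')^{-1}\!\cdot\! n\bigr) \qquad\text{in $\mathcal{Y}_1$,}
\]
which is exactly the conjugation formula used in the preceding paragraph to define the $\gm$-action on $\N$. Thus this axiom is tautological once the action has been identified correctly.

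Finally, to see that $\Phi$ is an isomorphism I would write down its inverse on coordinate rings. The map $\Phi_1^*$ is the one displayed in the statement; its candidate inverse sends $t \mapsto (\gamma x + \delta)^{-1}$, $\gamma \mapsto \gamma (\gamma x + \delta)^{-1}$, $\delta \mapsto \delta (\gamma x + \delta)^{-1}$, and $x \mapsto x$. The image of $\gamma x + \delta - 1$ is visibly zero, so this is well defined, and the two compositions are the identity by inspection. The only real obstacle is bookkeeping: one must verify by a short $\PGL_2$-matrix calculation that $s(t) \circ n$ in $\mathcal{Y}$ has the entries $(t^{-1}\gamma, t^{-1}\delta, x)$ in the displayed parametrization of $\mathcal{Y}_1$, and that source/target conventions line up with the semi-direct-product formulas. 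Once this dictionary is set, every verification is immediate.
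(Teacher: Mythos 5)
Your proposal is correct and follows essentially the same route as the paper: the map is defined on morphism schemes by composing the enhanced slice with the inclusion of $\N$, the groupoid axioms reduce to the conjugation formula defining the $\gm$-action on $\N$, and the isomorphism is verified by the explicit coordinate-ring formulas $\gamma\mapsto t^{-1}\gamma$, $\delta\mapsto t^{-1}\delta$, $x\mapsto x$, whose inverse is exactly the one you wrote down.
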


\begin{cor}
\label{cor:simplification}
Pullback along the equivalence of groupoids $\gm \ltimes \N \to \G \ltimes X$ induces an equivalence of the corresponding categories of representations.
\end{cor}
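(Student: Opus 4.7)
The statement is a direct consequence of the results already assembled, so the plan is essentially to string them together. First, I would observe that the two previous propositions combine to show that the morphism $\gm \ltimes \N \to \G \ltimes X$ is itself an equivalence of groupoids: the preceding proposition identifies $\gm \ltimes \N$ with $\mathcal{Y}$ via a groupoid isomorphism (which is in particular an equivalence since both fully-faithfulness and local essential surjectivity hold trivially for an isomorphism), and the proposition before that asserts that $\mathcal{Y} \to \G \ltimes X$ is an equivalence. Composing, one obtains an equivalence of groupoids $\gm \ltimes \N \to \G \ltimes X$.

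Next, I would apply Proposition \ref{prop:descent} to this composite equivalence. That proposition tells us that pullback along any equivalence of groupoids yields a functor between the associated categories of representations that is both fully faithful and essentially surjective. Since a functor of categories that is fully faithful and essentially surjective is, by definition (or by the standard reconstruction of a quasi-inverse), an equivalence of categories, the pullback functor
\[
\Vec^\G(X) = \Vec(\G \ltimes X) \longrightarrow \Vec(\gm \ltimes \N)
\]
is an equivalence, which is the desired statement.

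There is no real obstacle here: all the geometric work has been done in the preceding subsections (building $\mathcal{X}$ from $X_\B$, constructing the orbit-structure and slice morphisms, identifying the kernel $\N$, and verifying that the two maps combine into an isomorphism onto $\mathcal{Y}$). The corollary is the formal bookkeeping step in which one transports these groupoid-level equivalences to the level of representation categories via the descent result. The only mild thing to check is that fully-faithfulness and essential surjectivity indeed compose along the chain $\gm \ltimes \N \to \mathcal{Y} \to \G \ltimes X$; since both constituent morphisms are equivalences of groupoids, this composition is immediate from the definition.
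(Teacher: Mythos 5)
Your argument is exactly the one the paper intends: the corollary follows by composing the groupoid isomorphism $\gm \ltimes \N \isomto \mathcal{Y}$ with the equivalence $\mathcal{Y} \to \G \ltimes X$ and then invoking Proposition \ref{prop:descent} to conclude that pullback is fully faithful and essentially surjective, hence an equivalence of categories. No gaps; this matches the paper's reasoning.
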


\subsubsection*{Concretizing: combinatorial input}
Corollary \ref{cor:simplification} reduces the problem of describing $\G$-equivariant vector bundles on $X$ to a corresponding problem for $\gm\ltimes \N$. The semi-direct-product decomposition of $\gm\ltimes \N$ allows one to describe a representation of $\gm\ltimes \N$ as a $\gm$-equivariant vector bundle on $\aone$ equipped with a compatible action of $\N$.  This category of $\gm$-equivariant vector bundles on $\aone$ has a nice description that goes back to Rees.

\begin{thm}[{{\em cf.} \cite[Theorem 3.1]{Asok}}]
\label{thm:rees}
The functor ``fiber over $1$" induces an equivalence between the categories $\Vec^{\gm}(\aone)$ and the category $\Filt(k)$ of finite-dimensional $k$-vector spaces equipped with a finite decreasing filtration.
\end{thm}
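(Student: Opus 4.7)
The plan is to translate $\gm$-equivariant vector bundles on $\aone = \Spec(k[x])$ into the language of graded modules, and then describe an explicit Rees-style inverse to the ``fiber over $1$'' functor.

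First I would invoke the standard dictionary: since $\aone$ is affine, a $\gm$-equivariant quasi-coherent sheaf on $\aone$ is the same thing as a $\Z$-graded $k[x]$-module $M = \bigoplus_n M_n$, where $k[x]$ carries the grading that assigns $x$ an appropriate weight (say $-1$). The vector-bundle hypothesis amounts to $M$ being finitely generated and projective over $k[x]$; since $k[x]$ is a PID, $M$ is automatically free. A graded version of the structure theorem (or a short induction lifting a homogeneous basis of the lowest nonzero graded piece of $M/xM$) then shows that $M$ decomposes as a finite direct sum $\bigoplus_i k[x](n_i)$ of graded shifts of $k[x]$.

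Next I would write down the equivalence explicitly. In one direction, send $M$ to $V := M/(x-1)M$ equipped with the decreasing filtration $F^n V$ defined as the image in $V$ of $\bigoplus_{m\ge n} M_m$. In the reverse direction, send a filtered vector space $(V, F^\bullet)$ to its Rees module
\[
R(V) := \bigoplus_{n \in \Z} F^n V,
\]
graded by $n$ and regarded as a $k[x]$-module via the inclusions $F^n V \hookrightarrow F^{n-1} V$ (encoding multiplication by $x$). Finiteness of the filtration makes $R(V)$ finitely generated, and splitting the filtration by choosing a compatible basis exhibits $R(V)$ as a direct sum of graded shifts of $k[x]$, hence free.

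Finally I would check that these two functors are mutually quasi-inverse. For $R(V)/(x-1)R(V)$, multiplication by $x-1$ telescopes the direct sum onto $V$ itself, and the filtration is recovered from the grading. Conversely, for $M = \bigoplus_i k[x](n_i)$ the fiber over $1$ is $\bigoplus_i k \cdot \bar e_i$ with filtration $F^n = \mathrm{span}\{\bar e_i : n_i \ge n\}$, and the Rees construction rebuilds $M$ from this data. Functoriality on morphisms is immediate since both operations are manifestly natural. The essential inputs are the graded structure theorem for $k[x]$-modules and the isomorphism $k[x]/(x-1) \cong k$; the only obstacle is pinning down grading conventions so that the induced filtration is genuinely decreasing and the Rees functor lands in $\gm$-equivariant free modules, which is a purely bookkeeping matter.
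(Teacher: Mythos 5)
The paper does not prove this statement itself --- it quotes it from the reference given in the theorem header --- but your argument is the standard Rees-construction proof of that result: identify $\gm$-equivariant sheaves on $\aone$ with $\Z$-graded $k[x]$-modules, split a finitely generated projective graded module as a sum of shifts $k[x](n_i)$, pass to the fiber at $1$ with the filtration induced by the grading, and use the Rees module $\bigoplus_{n} F^n V$ as the quasi-inverse. This is correct and is essentially the same route the cited source takes; the only delicate point is the sign convention on the weight of $x$ (so that the induced filtration is decreasing), which you already flag as bookkeeping.
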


Composing the restriction functor along $s:\gm\ltimes \aone\to \gm\ltimes \N$ with the functor of Theorem \ref{thm:rees} defines a functor $\Vec(\gm\ltimes\N)\to \Filt(k)$. On the other hand, we have $(\gm\ltimes \N)|_1 = (\G\ltimes X)|_x = \T\ltimes \Spec(k)$, and so restriction to the fiber at $1$ defines a functor $\Vec(\gm\ltimes\N)\to \Rep(\T)$. Combining these two constructions, we obtain a functor
\[
\Phi: \Vec(\gm\ltimes \N) \to \Filt^\T(k).
\]
By Corollary \ref{cor:simplification}, the following claim implies Theorem \ref{thm:example}.

\begin{claim}
The functor $\Phi: \Vec(\gm\ltimes\N) \to \Filt^\T(k)$ constructed above is fully faithful, and its essential image is spanned by those objects satisfying \textbf{(C)}.
\end{claim}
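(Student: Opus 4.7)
The plan is to exploit the semi-direct-product structure $\gm\ltimes\N$: a representation of this groupoid is the same as a $\gm$-equivariant vector bundle $V$ on $\aone$ together with a $\gm$-equivariant action of the group scheme $\N$ on $V$. By Theorem~\ref{thm:rees}, the first datum is equivalent to a filtered vector space $(V_1,F^\bullet)$, while the restriction of the $\N$-action to the fiber $\N|_1=\T$ is a representation $\rho$ of $\T$ on $V_1$; these two restrictions are precisely what define $\Phi$. So the content of the claim is that condition \textbf{(C)} is exactly the obstruction to reconstructing the full $\gm$-equivariant $\N$-action from the pair $(\rho,F^\bullet)$, and that this reconstruction is unique.

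I would analyze the $\N$-action on two regions of $\aone$. Over the open $\gm\subset\aone$, the character $\delta$ identifies $\N|_\gm\cong\gm\times\gm$, so by $\gm$-translation a $\gm$-equivariant $\N|_\gm$-action on $V|_\gm$ is determined uniquely by $\rho$, with no additional condition imposed. The substance is at $x=0$, where $\N$ degenerates to $\ga$. In characteristic $0$, since $\N$ has smooth connected $1$-dimensional fibers, the infinitesimal structure along the identity section controls much of the $\N$-action: one identifies $\Lie(\N)$ as a $\gm$-equivariant trivial line bundle on $\aone$ generated by $\partial/\partial\gamma|_e$, computes its weight from the formula $\gamma\mapsto t^{-1}\gamma$, and translates this weight via Rees into a one-dimensional filtered space concentrated in a single degree. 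The tensor-filtration formula then shows that a $\gm$-equivariant $\O_{\aone}$-linear map $\Lie(\N)\otimes V\to V$ is precisely a linear endomorphism of $V_1$ shifting $F^\bullet$ by one step, in the direction stipulated by \textbf{(C)}; restriction to the fiber at $1$ identifies this endomorphism with $d\rho$.

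For essential surjectivity onto the subcategory cut out by \textbf{(C)}, I would reverse this analysis. Starting from $(V_1,\rho,F^\bullet)$ satisfying \textbf{(C)}, build the Rees bundle $V$; construct the $\N|_\gm$-action on $V|_\gm$ by $\gm$-translation of $\rho$; and use \textbf{(C)} to produce a $\gm$-equivariant $\Lie(\N)$-action on $V$ which, combined with the torus action over $\gm$, uniquely specifies a $\gm$-equivariant $\N$-action on $V$. For full faithfulness, any morphism of $\gm\ltimes\N$-representations is in particular a filtered map (after Rees) intertwining $\rho$ at the fiber $1$; conversely, such a map lifts uniquely to a morphism of $\gm$-equivariant bundles, and the compatibility with $\N$ reduces by $\gm$-equivariance to compatibility with $\rho$ and $d\rho$, both of which are automatic.

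The main obstacle is verifying that the infinitesimal $\Lie(\N)$-action and the $\N|_\gm$-action assemble into a single $\N$-action on the global bundle $V$, since $\Lie(\N)$-actions do not in general integrate to actions of non-unipotent groups. A concrete way to handle this uniformly is to work directly with the Hopf algebra $k[\N]=k[x,\gamma][(1-\gamma x)^{-1}]$, whose coproduct is determined by $\delta=1-\gamma x$ being group-like and $\gamma$ being $\delta$-skew-primitive. A $\gm$-equivariant $\N$-comodule structure on the Rees bundle of $V_1$ is then encoded by a finite list of operators on $V_1$ read off from expanding the coaction in powers of $\gamma$, together with coassociativity identities inherited from the group law. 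Condition \textbf{(C)} is exactly what guarantees the existence of these operators with the correct filtration shift and grading weights, while $\rho$ together with $d\rho$ determines them uniquely; this yields a clean proof of both essential surjectivity and full faithfulness without an explicit gluing argument.
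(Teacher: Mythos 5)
Your proposal follows the same skeleton as the paper's argument: restrict to the open orbit $\aone\setminus 0$, where transitivity reduces everything to $\Rep(\T)$; use Rees's theorem to control extensions of bundles and of morphisms across $x=0$; and identify \textbf{(C)} with the statement that the fiber $\Lie(\T)$ of the $\gm$-equivariant line bundle $\Lie(\N)$ is concentrated in a single filtration degree, so that \textbf{(C)} is exactly the condition for the $\Lie(\N)$-action to extend over $\aone$. The one place you genuinely diverge is the last step of essential surjectivity. The paper closes with an unspecified ``Lie-theory argument'' showing that the extended $\gm$-equivariant $\Lie(\N)$-action integrates to an $\N$-action; you instead bypass integration by writing the $\N$-action as a comodule structure over the explicit Hopf algebra $k[x,\gamma][(1-\gamma x)^{-1}]$ and checking directly that the coefficient operators of the coaction --- which, since $\N$ restricted to $\aone\setminus 0$ is a torus acting through the group-like element $\delta$, are of the form $(-x)^k\binom{N}{k}$ with $N$ the weight operator (a normalization of $d\rho$) --- preserve the Rees lattice precisely when $N$ shifts the filtration by one step, since a degree-$k$ polynomial in a one-step-shifting operator shifts by at most $k$ steps. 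This is a legitimate and arguably more self-contained route for this particular $\N$: it uses characteristic $0$ only to form $\binom{N}{k}$ and avoids any general statement about integrating equivariant Lie-algebra actions on degenerating group schemes. The cost is that it is wedded to the explicit presentation of $\N$ and would have to be redone for the general group schemes appearing in Theorem \ref{thm:main}, whereas the paper's Lie-theoretic formulation is set up to generalize. One detail to nail down in your version: the coaction takes values in $V\otimes_{k[x]}k[x,\gamma][(1-\gamma x)^{-1}]$, so ``expanding in powers of $\gamma$'' requires first clearing the denominator by a power $(1-\gamma x)^{M}$ bounded in terms of the $\delta$-weights occurring in $V$; this is routine but should be stated.
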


\begin{proof}[Sketch of proof]
In what follows, we identify $\mathcal{Y}$ and $\gm\ltimes \N$ using the isomorphism of the previous section.
\vskip 5pt
\noindent{\em Step 1.}  Objects in the image of $\Phi$ satisfy condition \textbf{(C)}.  The fiber of $\N$ at $1$ is $\T$.  Since $\N$ is smooth, $\Lie(\N)$ is a $\gm$-equivariant vector bundle on $\aone$.  We identify the fiber of $\Lie(\N)$ at $1$ with $\Lie(\T)$.  The formula for the $\gm$ action on $\N_1$ shows that Theorem \ref{thm:rees} equips this $1$-dimensional vector space with a filtration that has jump in degree $-1$, i.e., $F^i(\Lie(\T)) = 0$ for $i \geq 0$ and $F^i(\Lie(\T)) = \Lie(\T)$ for $i < 0$.

Let $W$ be an object of $\Vec(\gm\ltimes \N)$, which we view as a $\gm$-equivariant vector bundle on $\aone$ equipped with a $\gm$-equivariant representation of $\N$. Let $V = \Phi(W)$. The $\gm$-equivariance of the action of $\N$ on $W$ implies that the induced action of $\Lie(\T)$ on $V$ is compatible with the filtrations, i.e., for all $i,j$ one has $F^i(\Lie(\T)) \times F^j(V) \to F^{i+j}(V)$; since $F^\bullet$ has a single jump at $-1$, this compatibility is precisely \textbf{(C)}.
\vskip 5pt
\noindent{\em Step 2.} The functor $\Phi$ is fully faithful.  Since $\aone\setminus 0$ is dense in $\aone$, the restriction functor $\Vec(\mathcal{Y})\to \Vec(\mathcal{Y}|_{\aone\setminus 0})$ is faithful. Since $\mathcal{Y}|_{\aone\setminus 0}$ is transitive, the restriction functor $\Vec(\mathcal{Y}|_{\aone\setminus 0})\to \Rep(\T)$ is an equivalence ({\em cf.} Example \ref{ex:transitive}). Thus $\Vec(\mathcal{Y})\to \Rep(\T)$ is faithful. Since $\Filt^\T(k)\to \Rep(\T)$ is also faithful, so is $\Phi:\Vec(\mathcal{Y})\to \Filt^\T(k)$. Let $W,W'$ be objects of $\Vec(\mathcal{Y})$, and let $f:\Phi(W)\to\Phi(W')$ be a morphism in $\Filt^\T(k)$. Since $\Vec(\mathcal{Y}|_{\aone\setminus 0})\to \Rep(\T)$ is an equivalence, the morphism of $\T$-modules underlying $f$ lifts to a morphism $g:W|_{\aone\setminus 0}\to W'|_{\aone\setminus 0}$. This morphism extends $\mathcal{Y}$-equivariantly to $\aone$ precisely when the underlying morphism of $\gm$-equivariant vector bundles extends to $\aone$. By Theorem \ref{thm:rees}, the vector bundle morphism extends, since $f$ respects the filtrations.
\vskip 5pt
\noindent{\em Step 3.} The essential image of $\Phi$ is spanned by objects satisfying {\textbf{(C)}}.  Let $(V,\rho,F^\bullet)$ be an object of $\Filt^\T(k)$ satisfying {\textbf{(C)}}. Since $\Vec(\mathcal{Y}|_{\aone\setminus 0})\to \Rep(\T)$ is an equivalence, there is an object $W^o$ of $\Vec(\mathcal{Y}|_{\aone\setminus 0})$ whose fiber over $1$ is isomorphic to $(V,\rho)$ in $\Rep(T)$. By Theorem \ref{thm:rees}, the filtration $F^\bullet$ on $V$ determines a $\gm$-equivariant extension $W$ over $\aone$ of the $\gm$-equivariant vector bundle underlying $W^o$. Condition \textbf{(C)} implies that the action of $\Lie(\N|_{\aone\setminus 0})$ on $W^o$ extends to a $\gm$-equivariant action of $\Lie(\N)$ on $W$. By a Lie-theory argument, the existence of such an extension of the action to $\Lie(\N)$ implies that the action of $\N|_{\aone\setminus 0}$ on $W^o$ extends to an action of $\N$ on $W$.
\end{proof}

\section{A generalization to toroidal spherical varieties}
\label{s:generalization}
In this section, we formulate a generalization of Theorem \ref{thm:example}.  Henceforth, we take $k$ to be an algebraically closed field of characteristic $0$.

\subsubsection*{Toroidal spherical varieties}
The hypotheses on $k$ simplify statements about spherical varieties. Let $\G$ be a connected reductive group over $k$.  Fix a Borel subgroup $\B$ of $\G$. Let $X$ be a normal $\G$-variety with a marked point $x\in X(k)$. The marked $\G$-variety $(X,x)$ is a {\em spherical $(\G,\B)$-variety}, if the $\B$-orbit of $x$ in $X$ is dense (and hence open). If $(X,x)$ is spherical, then the $\G$-orbit of $x$ is also dense; we then write $X^o$ for the $\G$-orbit of $x$ in $X$ and $X^o_\B$ for the $\B$-orbit of $x$ in $X$.


From now on, we take $(X,x)$ to be a spherical $(\G,\B)$-variety. Let $\Delta_X$ be the closed subset of $X$ that is the union of the $\B$-stable prime divisors (irreducible codimension-$1$ closed subsets) of $X$ that are not $\G$-stable, and set $X_\B = X \setminus \Delta_X$.

\begin{defn}
 The spherical $(\G,\B)$-variety $(X,x)$ is {\em toroidal} if every $\B$-stable prime divisor that contains a $\G$-orbit is $\G$-stable, i.e.,  if the $\G$-saturation of $X_\B$ in $X$ is $X$.
\end{defn}

\begin{ex}
Let $\A$ be a torus over $k$. For our purposes, a \emph{toric $\A$-variety} is a spherical $(\A,\A)$-variety $(S,s)$ such that the orbit map $\A\to S$ associated to $s$ is an open immersion, i.e.,  such that $X^o = \A$. Toric varieties are the simplest toroidal spherical varieties.
\end{ex}

A basic invariant of a spherical variety is its \emph{weight lattice}: let $\H$ be the stabilizer of $x$ in $\G$. The weight lattice $\Lambda$ of $(X,x)$ is the character lattice of $\B/(\H\cap \B)\U$, where $\U$ is the unipotent radical of $\B$. By definition, $\Lambda$ is a subgroup of the character lattice of $\B$.

Note that $\Lambda\subset \mathrm{X}^*(\B)$ depends only on the $(\G,\B)$-spherical variety $(X^o,x)$ and not on the boundary structure. The classification of spherical varieties ({\em cf.} \cite{KnopLV}) associates to a toroidal $(\G,\B)$-spherical variety $(X,x)$ a fan $\Sigma$ in ${\rm Hom}_\Z(\Lambda,\Q)$. The fan reflects faithfully the structure of the embedding $X^o\to X$ in the sense that one can reconstruct $X$ from $X^o$ and $\Sigma$.

\subsubsection*{Simplification}
We now assume that our spherical $(\G,\B)$-variety $(X,x)$ is toroidal, and we go through the steps of \S\ref{s:example} for $(X,x)$. Let ${\mathcal X}= \G \ltimes X|_{X_\B}$. Since by the toroidal hypothesis the $\G$-saturation of $X_\B$ in $X$ is all of $X$, we have:

\begin{prop}
The morphism of groupoids ${\mathcal X} \to \G \ltimes X$ is an equivalence.
\end{prop}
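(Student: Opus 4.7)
My proof plan is to follow the same two-step strategy used to establish Proposition~\ref{prop:firstequiv}, now in the general toroidal setting.

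First, the morphism $\mathcal{X}\to \G\ltimes X$ is fully faithful automatically, because $\mathcal{X}$ is the groupoid induced by $\G\ltimes X$ along the open immersion $X_\B \hookrightarrow X$; the cartesian square defining full faithfulness is built into the induced-groupoid construction.

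It remains to check that $\mathcal{X}\to \G\ltimes X$ is locally essentially surjective, which I would verify in the Zariski topology. Unwinding the definition, one must show that the morphism
\[
f_0'\colon \G \times X_\B \longrightarrow X, \qquad (g,y)\longmapsto g\cdot y
\]
admits Zariski-local sections over $X$. The toroidal hypothesis on $(X,x)$ says precisely that the $\G$-saturation of $X_\B$ in $X$ is all of $X$. Since $k$ is algebraically closed of characteristic $0$ and $X$ is a variety, its closed points are dense; for each closed point $z\in X$ the saturation condition produces some $g\in \G(k)$ with $g^{-1}\cdot z \in X_\B$, i.e., $z\in g\cdot X_\B$. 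Because each $g\in \G(k)$ acts on $X$ by an automorphism and $X_\B$ is open in $X$ (being the complement of the finite union $\Delta_X$ of $\B$-stable, non-$\G$-stable prime divisors), the translates $g\cdot X_\B$ are Zariski open subsets of $X$. They therefore form an open cover of $X$.

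Over each $g_0\cdot X_\B$ the map $f_0'$ admits the obvious section $z\mapsto (g_0,\,g_0^{-1}\cdot z)$, which lands in $\G\times X_\B$ by construction. This yields Zariski-local essential surjectivity, and hence the desired equivalence. The argument is essentially a repackaging of Proposition~\ref{prop:firstequiv}; the only place care is needed is in translating the geometric statement of the toroidal hypothesis (set-theoretic saturation) into the categorical requirement of producing local sections, and this is handled cleanly by working with $\G(k)$-translates and using that $k$ is algebraically closed.
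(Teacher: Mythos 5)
Your proof is correct and matches the paper's argument: full faithfulness is automatic from the induced-groupoid construction, and Zariski-local essential surjectivity follows because the toroidal hypothesis makes the $\G(k)$-translates of $X_\B$ an open cover of $X$, over each of which $f_0'$ has an evident section. The only added content in your write-up is the (correct) bookkeeping that passes from set-theoretic saturation to a cover by $\G(k)$-translates using that $k$ is algebraically closed.
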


\paragraph*{Orbit-structure morphism.} Let $(S,s)$ be a toric variety for a torus $\A$ such that the fan associated to $\A$ and $(S,s)$ is the fan $\Sigma$ associated to $(X,x)$. Note, in particular, that the character lattice of $\A$ is the weight lattice $\Lambda$ of $(X,x)$. By refining the construction of $\Sigma$ in \cite{KnopLV}, we produce a groupoid morphism ${\mathcal X} \to \A \ltimes S$ mapping $x\in X(k)$ to $s\in S(k)$. (This morphism can be characterized by a reasonable universal-mapping property.)  We call this morphism the {\em orbit-structure morphism} for the toroidal spherical $(\G,\B)$-variety $(X,x)$ or, when the context is clear, for $X$; the morphism to $\gm \ltimes \aone$ of \S\ref{s:example} is an instance of this construction.

\paragraph*{Slicing.} Let $\mathrm{P}\supset \B$ be the subgroup scheme of $\G$ normalizing the reduced subscheme supported on $\Delta_X$. Since $k$ has characteristic $0$, the $k$-scheme $\mathrm{P}$ is smooth. Since $\mathrm{P}$ contains $\B$, it is thus a parabolic subgroup of $\G$. Let $\mathrm{U}$ be the unipotent radical of $\mathrm{P}$. The local structure theory for toroidal spherical varieties ({\em cf.} \cite{KnAsymp}) produces a closed immersion $(S,s)\to (X_\B,x)$ and a Levi factor $\L$ of $\mathrm{P}$ that stabilizes the image of $S$. Furthermore, the action morphism $\mathrm{U}\times S\to X_\B$ is an isomorphism. Consequently, we have:

\begin{prop}
   Let ${\mathcal Y}$ be the groupoid induced by ${\mathcal X}$ and $S \to X_\B$. Then the groupoid homomorphism ${\mathcal Y} \to {\mathcal X}$ is an equivalence.
\end{prop}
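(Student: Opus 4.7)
The plan is to verify directly the two conditions in the definition of an fpqc-equivalence for the morphism $\iota : {\mathcal Y} \to {\mathcal X}$: fully faithfulness and local essential surjectivity.

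Full faithfulness is automatic. By construction, ${\mathcal Y}$ is the groupoid induced by ${\mathcal X}$ along the slice $S \to X_\B$. As noted in the ``Induced groupoid'' example of Section~\ref{s:groupoids}, induced groupoids come with a fully faithful morphism: ${\mathcal Y}_1$ was defined precisely as the fiber product making the relevant square cartesian. So nothing needs to be proved here beyond invoking the construction.

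For local essential surjectivity, I would unwind the definition. Form ${\mathcal Y}_0' := {\mathcal Y}_0 \times_{\iota_0, s} {\mathcal X}_1 = S \times_{X_\B} ({\mathcal X}_1)$, which parameterizes pairs $(\sigma, \phi : \sigma \to x')$ consisting of a point $\sigma \in S$ and an ${\mathcal X}$-morphism from $\sigma$ to some $x' \in X_\B$; concretely, this is the locally closed subscheme of $\G \times S$ carved out by the condition that $g \cdot \sigma \in X_\B$. The morphism $\iota_0' : {\mathcal Y}_0' \to X_\B$ sends $(g, \sigma) \mapsto g \cdot \sigma$. The goal is to produce a section of $\iota_0'$, at least fpqc-locally. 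Here the key input is the local-structure theorem cited from \cite{KnAsymp}: the action morphism $\mathrm{U} \times S \to X_\B$ is an isomorphism. Composing its inverse with the inclusion $\mathrm{U} \hookrightarrow \G$ yields a morphism $X_\B \to \G \times S$ which factors through ${\mathcal Y}_0'$ and is a (global, not merely fpqc-local) section of $\iota_0'$, since by construction $\iota_0'(u, \sigma) = u \cdot \sigma$. This establishes essential surjectivity even in the strongest form mentioned in the discussion after the definition of fpqc-equivalence.

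The main obstacle is entirely absorbed into the local-structure theorem for toroidal spherical varieties and is therefore external to this proof; once it is invoked, the argument is a one-line combination of the inverse isomorphism $X_\B \isomto \mathrm{U} \times S$ with the tautological properties of induced groupoids. The toroidal hypothesis is needed only insofar as it guarantees that $\mathrm{P}$ (the normalizer of $\Delta_X$, reduced) is a parabolic containing $\B$, that the Levi $\L$ stabilizes $S$, and that $\mathrm{U} \times S \to X_\B$ is an isomorphism rather than merely an étale or open map; these are exactly the inputs supplied by \cite{KnAsymp}. Combining this proposition with Proposition~\ref{prop:firstequiv} and Proposition~\ref{prop:descent} then gives, as in the $\PGL_2$ example, an equivalence of categories $\Vec({\mathcal Y}) \isomto \Vec^{\G}(X)$, setting up the subsequent semi-direct-product splitting of ${\mathcal Y}$ along the orbit-structure morphism.
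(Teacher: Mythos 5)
Your proposal is correct and follows essentially the same route as the paper: full faithfulness is tautological for an induced groupoid, and the isomorphism $\mathrm{U}\times S\isomto X_\B$ from the local structure theorem of \cite{KnAsymp} supplies a global section of the essential-surjectivity morphism, exactly as the $\U\times\aone\isomto X_\B$ computation does in the $\PGL_2$ example. (Only a cosmetic point: the locus in $\G\times S$ where $g\cdot\sigma\in X_\B$ is open, not merely locally closed.)
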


Since $\L$ stabilizes the image of $S$, we have a groupoid morphism $\L\ltimes S\to \mathcal{Y}$. The composition $\L\ltimes S\to \mathcal{Y}\to \A\ltimes S$ with the orbit-structure morphism is the identity on the common object scheme $S$, and on the morphism schemes it corresponds to a group-scheme epimorphism $\L\to \A$. In the example of \S\ref{s:example}, we have $\L = \A = \gm$, and $\L\ltimes S\to \A\ltimes S$ is an isomorphism, which allows us to split $\mathcal{Y}$ as a semi-direct product. In the general case, without extra hypotheses, we cannot decompose ${\mathcal Y}$ as a semi-direct product as in the example of \S\ref{s:example}. In order to produce the decomposition, we must have a splitting of the homomorphism $\L\to \A$. By composing a splitting $\A\ltimes S\to \L\ltimes S$ with the enhanced slice $\L\ltimes S\to \mathcal{Y}$, we obtain a section $\A\ltimes S\to \mathcal{Y}$ of the orbit-structure morphism $\mathcal{Y}\to \A\ltimes S$.

A splitting of $\L\to \A$ exists if and only if for any maximal torus $\T$ of $\L$, the homomorphism $\T\to\A$ splits. Equivalently, a splitting exists if and only if the weight lattice $\Lambda$ of $(X,x)$, which is the character lattice of $\A$, is a direct summand of the character lattice of $\B$.

\begin{defn}\label{def:neutralizable}
A toroidal spherical $(\G,\B)$-variety $(X,x)$ is {\em neutralizable} if its weight lattice $\Lambda$ is a direct summand of $\mathrm{X}^*(\B)$.
\end{defn}

Note, in particular, that whether a given toroidal spherical $(\G,\B)$-variety $(X,x)$ is neutralizable depends only on the $\G$-homogeneous toroidal spherical $(\G,\B)$-variety $(X^o,x)$.

\paragraph*{Splitting.} Assume now that $X$ is a neutralizable, and fix a splitting $\A\to \L$ of $\L\to \A$. Let $\N$ be the kernel of the restriction of the orbit-structure morphism to $\mathcal{Y}$. As in \S\ref{s:example}, the groupoid $\N$ is a group scheme, and it comes equipped with an action of $\A$.

\begin{prop}
\label{prop:nsmooth}
  The $\A$-equivariant group scheme $\N$ over $S$ is smooth and with connected fibers.
\end{prop}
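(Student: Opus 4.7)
The plan is to combine the groupoid splitting with the local-structure theorem for toroidal spherical varieties. Because the orbit-structure morphism $\mathcal{Y}\to\A\ltimes S$ admits a section (given by the composition $\A\ltimes S\to\L\ltimes S\to\mathcal{Y}$ built from the splitting $\A\to\L$), there is a groupoid isomorphism $\mathcal{Y}\cong\A\ltimes\N$. This gives an identification $\mathcal{Y}_1\cong\A\times_k\N_1$ compatible with source maps to $S$. Since $\A$ is smooth and connected, $\N_1$ will be smooth over $S$ with geometrically connected fibers if and only if the source morphism $\mathcal{Y}_1\to S$ is.

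To prove the latter for $\mathcal{Y}_1=\{(g,s)\in\G\times S : gs\in S\}$, I would use two ingredients: the local-structure isomorphism $\U\times S\isomto X_\B$ coming from the action of the unipotent radical of $\mathrm{P}$, and the Bruhat-type open immersion $\U^-\times\mathrm{P}\hookrightarrow\G$ associated to the opposite parabolic $\mathrm{P}^-$. Writing $g=u^-\cdot l\cdot u$ in this big cell, and using that $\L$ preserves $S$ (acting on it via the quotient $\L\to\A$) while $\U$ translates itself on $X_\B$, the condition $gs\in S$ becomes an explicit equation on $u^-$ in terms of $l$, $u$, and $s$ that cuts out a smooth subscheme of $\U^-\times\mathrm{P}\times S$ with geometrically connected fibers over $S$. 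Since left-multiplication by elements of $\mathrm{P}$ preserves the condition $gs\in S$ (as $\mathrm{P}$ stabilizes $S$ up to the $\U$-factor in $X_\B$), and the $\mathrm{P}$-translates of the big cell cover $\G$, this local description globalizes.

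The chief obstacle is carrying out the Bruhat-cell computation uniformly across the boundary strata of $S$. Here the toroidal hypothesis plays its decisive role: it ensures that every $\G$-orbit in $X$ meets $X_\B$ as a union of $\mathrm{P}$-orbits, so the fiber dimension of $\mathcal{Y}_1\to S$ remains constant along $S$. In characteristic $0$, constant fiber dimension together with smoothness of the generic fiber upgrades to smoothness everywhere. Geometric connectedness of the special fibers then follows by $\A$-equivariance and a specialization argument from the generic fiber, echoing the concrete parameterization $\gamma x+\delta=1$ exhibited in \S\ref{s:example}.
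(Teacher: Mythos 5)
Your reduction to the source morphism $\mathcal{Y}_1\to S$ via the splitting $\mathcal{Y}\cong\A\ltimes\N$ is reasonable (though it borrows the isomorphism $\A\ltimes\N\isomto\mathcal{Y}$ from the proposition that \emph{follows} this one, so the two statements would have to be reordered), and the observation that the local structure theorem $\U\times S\isomto X_\B$ together with the toroidal hypothesis forces the fibers of $\mathcal{Y}_1\to S$ to have constant dimension $\dim\G-\dim\U$ is a genuinely useful step. But the two principles you invoke to convert this into the conclusion are both false in general, and you supply no argument specific to the situation at hand. First, constant fiber dimension plus smoothness of the generic fiber does \emph{not} imply smoothness in characteristic $0$: the family $\Spec k[x,y,t]/(y^2-x^3-t)\to\Spec k[t]$ has equidimensional fibers and smooth generic fiber but a cuspidal fiber at $t=0$. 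What \emph{is} true in characteristic $0$ is Cartier's theorem, which gives smoothness of each individual fiber of the group scheme $\N\to S$ for free; the real content of the proposition is therefore flatness of $\N\to S$, and constant fiber dimension yields flatness only with an extra input (e.g.\ that $\N_1$ is Cohen--Macaulay and $S$ is regular, as is visible in the hypersurface presentation $\gamma x+\delta=1$ of \S\ref{s:example}, or a direct local computation). Your Bruhat-cell paragraph, which is where such a computation would have to live, merely asserts that the equation on $u^-$ ``cuts out a smooth subscheme with geometrically connected fibers over $S$'' --- but that assertion is essentially the proposition itself, so nothing has been proved.

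Second, connectedness of the boundary fibers does not follow ``by $\A$-equivariance and a specialization argument.'' A flat group scheme with connected generic fiber can perfectly well have disconnected special fibers (N\'eron models are the standard example), and $\A$-equivariance only identifies the fibers of $\N$ over points lying in a single $\A$-orbit of $S$; it says nothing about how the fiber changes as one passes from the open orbit to a boundary stratum, which is exactly where the question sits. In \S\ref{s:example} connectedness at the boundary is seen by the explicit identification of the special fiber with $\ga$; in general one needs a positive argument, e.g.\ an identification of the boundary fibers of $\N$ as extensions of tori by unipotent groups. Finally, a smaller but real defect: left multiplication by $\mathrm{P}$ does not preserve the condition $gs'\in S$ --- it preserves $gs'\in X_\B\cong\U\times S$ only up to the $\U$-factor --- so the patching of your big-cell description over all of $\G$ also needs to be reworked.
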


By a construction as in \S\ref{s:example}, the inclusion $\N\to \mathcal{Y}$ extends to a groupoid morphism $\A\ltimes \N\to \mathcal{Y}$.

\begin{prop}
  For a (split) neutralizable toroidal spherical $(\G,\B)$-variety $(X,x)$, the morphism of groupoids $\A \ltimes \N \to {\mathcal Y}$ is an isomorphism.
\end{prop}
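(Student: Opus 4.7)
The plan is to argue that $\A \ltimes \N \to {\mathcal Y}$ is an isomorphism by running the usual ``split extension implies semi-direct product'' argument, now in the setting of $k$-groupoid schemes. Both groupoids share the object scheme $S$, and by construction the morphism is the identity there, so it suffices to show that the induced map on morphism schemes $\A \times \N_1 \to {\mathcal Y}_1$ is an isomorphism of schemes (compatibly with source, target, composition, identity, and inversion).

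First I would write down the forward map explicitly. Let $\sigma: \A\ltimes S \to {\mathcal Y}$ denote the section of the orbit-structure morphism $f:{\mathcal Y}\to \A\ltimes S$ supplied by the neutralizability hypothesis (and the chosen splitting $\A\to \L$). Recall that $\N_1\subset {\mathcal Y}_1$ has source equal to target, since $\N$ is a group scheme over $S$. For an $R$-point $(a,n)$ of $\A\times \N_1$ with $n$ having source (equivalently target) $y\in S(R)$, send $(a,n)$ to the composite $\sigma(a,y)\circ n$ in ${\mathcal Y}_1(R)$; its source is $y$ and its target is $a\cdot y$, matching the source and target of $(a,n)$ in $\A\ltimes \N$. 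Compatibility with composition in $\A\ltimes\N$ is automatic from the very definition of the $\A$-action on $\N$ as conjugation by $\sigma$ inside ${\mathcal Y}$.

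Next I would produce a scheme-theoretic inverse using $f$ and $\sigma$. Given $\psi\in{\mathcal Y}_1(R)$ with source $y$, let $f(\psi)=(a,y)\in (\A\ltimes S)_1(R)$ and set $n=\sigma(a,y)^{-1}\circ \psi\in{\mathcal Y}_1(R)$. Since $f(n)=f(\sigma(a,y))^{-1}\circ f(\psi)=\mathrm{id}_y$, the morphism $n$ lies in $\N_1(R)$, and we assign $\psi\mapsto (a,n)$. Both assignments are scheme morphisms because they are built from composition, inversion, $f$, and $\sigma$, all of which are scheme morphisms; checking that they are mutually inverse and respect the full groupoid structure reduces to a direct computation in ${\mathcal Y}$ using $f\sigma=\mathrm{id}$ (analogous to the semi-direct product decomposition of groups with a split extension).

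The main obstacle is not in the present step but in having assembled the correct objects in the preceding propositions: one needs that $\sigma$ is a groupoid section of $f$ on the nose (which follows from the splitting $\A\to\L\to \mathrm{P}$ and the fact that $\L$ stabilizes $S$), and that the $\A$-action on $\N$ from the previous proposition really is the one induced by conjugation via $\sigma$ (which is the definition). Given those, the only genuine verification is that the forward and backward maps above are mutually inverse scheme maps — a routine check by Yoneda once each is written down — so the argument reduces to the abstract splitting lemma applied to the short exact sequence of groupoids $1\to \N\to {\mathcal Y} \xrightarrow{f} \A\ltimes S\to 1$ with the section $\sigma$.
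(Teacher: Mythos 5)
Your proposal is correct and takes essentially the same approach as the paper: the map on morphism schemes is defined by composing the section of the orbit-structure morphism with elements of $\N_1$ inside ${\mathcal Y}$, and it is inverted using $f$ and $\sigma$, exactly as in the explicit computation carried out for the $\PGL_2$ example in Section 3, which the paper says is the model for the general case. The only cosmetic caveat is that ``short exact sequence of groupoids'' is informal language, but the concrete forward and inverse maps you write down are precisely what is needed.
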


\begin{rem}
  We use the term ``neutralizability" to suggest the following, which our work makes precise: the orbit-structure morphism makes the groupoid ${\mathcal X}$ (or, up to equivalence, $\G\ltimes X$) a gerbe over $\A\ltimes S$. The slice construction from the local structure theory of spherical varieties can sometimes be used to construct neutralizations (i.e.,  sections) of this gerbe. The choice of a neutralization allows one to interpret representations of ${\mathcal X}$ (or, equivalently, representations of $\G \ltimes X$) as representations of $\A \ltimes S$ equipped with a compatible (i.e., $\A$-equivariant) representation of $\N$.
\end{rem}

\subsubsection*{Concretization}
Under the neutralizability hypothesis, as in \S\ref{s:example}, the equivalence $\A\ltimes \N\to \G\ltimes X$ reduces the description of $\G$-equivariant vector bundles on $X$ to the description of equivariant vector bundles on $\A\ltimes \N$. As before, the first step is to describe the category of representations of $\A \ltimes S$; Klyachko provided such a description ({\em cf.} \cite{Klyachko}).

Let $\Sigma(1)$ be the set of $1$-dimensional cones in $\Sigma$ (which correspond to codimension-$1$ orbits of $\A$ in $S$). Thus each $\alpha\in\Sigma(1)$ is a ray in $\Q\otimes \X_*(\A)$. For each $\alpha\in\Sigma(1)$, let $n_\alpha\in \X_*(\A)$ be the primitive generator of $\alpha$. Let $\Filt^{\Sigma(1)}(k)$ be the category whose objects are collections $(V,\{F_\alpha^\bullet\}_{\alpha\in \Sigma(1)})$, where $V$ is a finite dimension $k$-vector space, and where each $F^\bullet_\alpha$ is a finite decreasing filtration on $V$. Morphisms in $\Filt^{\Sigma(1)}(k)$ are $k$-linear maps that preserve the filtrations.

\begin{thm}
\label{thm:Klyachko}
  Let $(S,s)$ be a toric $\A$-variety.  The functor ``fiber over $s$" induces an equivalence between the category $\Vec^\A(S)$ and the full subcategory of $\Filt^{\Sigma(1)}(k)$ spanned by objects $(V,\{F_\alpha^{\bullet}\}_{\alpha \in \Sigma(1)})$ satisfying the following compatibility condition among the filtrations:
\begin{itemize}
  \item[\bf{(K)}] for each $\sigma \in \Sigma$, there is a $\mathrm{X}^*(\A)$-grading $V = \bigoplus_{\chi \in \mathrm{X}^*(A)} V^{\sigma}_{\chi}$ such that
\[
      F^p_\alpha(V) = \bigoplus_{\{\chi | \langle \chi,n_{\alpha} \rangle \geq p \}} V^{\sigma}_{\chi}
\]
for all $p\in\Z$ and $\alpha\in\Sigma(1)$ such that $\alpha$ is a face of $\sigma$.
\end{itemize}
\end{thm}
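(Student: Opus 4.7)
The plan is a direct toric analogue of the argument for Theorem \ref{thm:example}. Because $\A$ is commutative and $S$ is covered by the canonical $\A$-stable affine opens $S_\sigma = \Spec k[\sigma^\vee \cap \Lambda]$ (with $S_\sigma \cap S_\tau = S_{\sigma \cap \tau}$), descent reduces the computation of $\Vec^\A(S)$ to a compatible family of local categories $\sigma \mapsto \Vec^\A(S_\sigma)$ indexed by the face poset of $\Sigma$. My plan is to first describe each local category, then assemble the globalization by comparing these descriptions under face restriction.

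For the local step, I would identify $\A$-equivariant coherent sheaves on $S_\sigma$ with finitely generated $\Lambda$-graded modules over $R_\sigma := k[\sigma^\vee \cap \Lambda]$, and the locally-free ones with the projective ones. The crucial lemma is that every graded projective $R_\sigma$-module decomposes as a direct sum $\bigoplus_i R_\sigma(-\chi_i)$ of shifted rank-one summands, with $\chi_i \in \Lambda$; for $\sigma$ full-dimensional this is a graded Nakayama argument at the unique $\A$-fixed point of $S_\sigma$, and for arbitrary $\sigma$ one reduces to the full-dimensional case by passing to the quotient torus acting faithfully on the closed $\A$-orbit of $S_\sigma$. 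Recording the weights of the generators gives a canonical $\Lambda$-grading $V = \bigoplus_\chi V^\sigma_\chi$ on the fiber $V$ over $s \in \A \subset S_\sigma$, and the sheaf is then recovered from this grading as $\bigoplus_\chi V^\sigma_\chi \otimes_k R_\sigma(-\chi)$. Thus $\Vec^\A(S_\sigma)$ is equivalent to the category of finite-dimensional $\Lambda$-graded $k$-vector spaces.

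Next, I would analyze how these local descriptions restrict along a face inclusion $S_\alpha \hookrightarrow S_\sigma$. For a ray $\alpha$ of $\sigma$, the resulting graded $R_\alpha$-module depends only on the $\Z$-coarsening $\chi \mapsto \langle \chi, n_\alpha \rangle$ of the $\Lambda$-grading on $V$ --- equivalently, only on the decreasing filtration $F^p_\alpha(V) = \bigoplus_{\langle \chi, n_\alpha \rangle \geq p} V^\alpha_\chi$. This is the toric Rees-type reduction corresponding to Theorem \ref{thm:rees}. Consequently, a bundle on $S_\sigma$ yields, for each ray $\alpha$ of $\sigma$, a filtration $F^\bullet_\alpha$ of $V$, and all of these filtrations arise by coarsening from the single $\Lambda$-grading $V = \bigoplus V^\sigma_\chi$ --- which is precisely condition \textbf{(K)}. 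Conversely, given $(V, \{F^\bullet_\alpha\})$ satisfying \textbf{(K)}, I would use the grading at each cone $\sigma$ to build $\mathcal{F}|_{S_\sigma}$ and glue on overlaps; the gluing is dictated by the common ray-filtrations, which \textbf{(K)} prescribes independently of the cone.

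I expect the main obstacle to be the graded splitting lemma for non-full-dimensional cones, where there is no honest $\A$-fixed point on which to run graded Nakayama and one must instead work with the quotient torus acting faithfully on the closed $\A$-orbit of $S_\sigma$. A secondary subtlety lies in the essential-surjectivity step: one must verify that the freedom in choosing a compatible $\Lambda$-grading at each cone does not affect the glued bundle, the point being that on overlaps $S_{\sigma \cap \tau}$ the sheaf is determined by the filtrations along the common rays, not by the gradings themselves.
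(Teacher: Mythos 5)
The paper does not prove this statement; it quotes it as Klyachko's theorem with a pointer to \cite{Klyachko}, so there is no in-paper argument to compare against. Your outline is the standard proof from that literature --- identify $\Vec^{\A}(S_\sigma)$ with $\Lambda$-graded projective $k[\sigma^\vee\cap\Lambda]$-modules, split these into rank-one shifts by graded Nakayama, and glue the resulting subsheaves of $j_*(V\otimes\O_{\A})$, which on each chart are determined by the ray filtrations alone --- and it is sound; the two points you flag (the splitting lemma for non-full-dimensional cones via the quotient torus, and the independence of the glued sheaf from the choice of splitting grading on each cone, which follows because a common grading turns sums over intersections of weight conditions into intersections of the filtration subspaces) are exactly the places that need care, and both go through as you describe.
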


Composing the restriction functor along $\A\ltimes S\to \A\ltimes \N$ with Klyachko's fiber-at-$s$ functor of Theorem \ref{thm:Klyachko} defines a functor $\Vec(\A\ltimes\N)\to \Filt^{\Sigma(1)}(k)$. On the other hand, we have $(\A\ltimes \N)|_s = (\G\ltimes X)|_x = \G_x\ltimes \Spec(k)$, and so restriction to the fiber at $s$ defines a functor $\Vec(\A\ltimes\N)\to \Rep(\G_x)$. Combining these two constructions, we obtain a functor
\[
\Phi_{\G,X}: \Vec(\A\ltimes \N) \to \Filt^{\G_x,\Sigma(1)}(k).
\]
Here $\Filt^{\G_x,\Sigma(1)}(k)$ has as objects collections $(V,\rho,\{F_i^{\bullet}\}_{i \in \Sigma(1)})$, where $(V,\rho)$ is an object of $\Rep(\G_x)$, and where $(V,\{F_i^{\bullet}\}_{i \in \Sigma(1)})$ is an object of $\Filt^{\Sigma(1)}(k)$. Morphisms are $\G_x$-equivariant $k$-linear maps that preserve the filtrations.

Since $\N$ is a smooth and $\A$-equivariant group scheme over $S$, its Lie algebra $\Lie(\N)$ comes equipped with the structure of an $\A$-equivariant vector bundle over $S$. Thus Klyachko's functor equips the fiber of $\Lie(\N)$ over $S$ with the structure of a $\Sigma(1)$-filtered vector space. The fiber of $\N$ over $s$ is $\G_x$, and so $\Lie(\G_x)$ becomes a $\Sigma(1)$-filtered vector space (and even a $\Sigma(1)$-filtered Lie algebra). Our generalization of Theorem \ref{thm:example} is as follows.

\begin{thm}
\label{thm:main}
Let $(X,x)$ be a (split) neutralizable toroidal $(\G,\B)$-spherical variety. The functor $\Phi_{\G,X}:\Vec^\G(X)\to \Filt^{\G_x,\Sigma(1)}(k)$ is fully faithful, and its essential image is spanned by objects $(W,\rho,\{F_\alpha^{\bullet}\}_{\alpha \in \Sigma(1)})$ satisfying Klyachko's condition {\bf{(K)}} and the following transversality condition:
\begin{itemize}
\item[{\bf{(C)}}] the action of the $\Sigma(1)$-filtered Lie algebra $\Lie(\G_x)$ on $W$ via $d\rho$ respects the filtrations, i.e.,  for all $i,j\in \Z$ and $\alpha\in \Sigma(1)$, we have
    \[
    F_\alpha^i(\Lie(\G_x))\times F_\alpha^j(W)\to F_\alpha^{i+j}(W).
    \]
\end{itemize}
\end{thm}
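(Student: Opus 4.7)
The plan is to mirror the three-step proof of Theorem~\ref{thm:example}. The chain of equivalences $\A\ltimes\N\isomto \mathcal{Y}\to\mathcal{X}\to \G\ltimes X$ from the preceding propositions (valid under the neutralizability hypothesis and the chosen splitting) identifies $\Vec^\G(X)$ with $\Vec(\A\ltimes\N)$, and the semi-direct-product decomposition together with Proposition~\ref{prop:descent} presents an object of $\Vec(\A\ltimes\N)$ as an $\A$-equivariant vector bundle $W$ on $S$ equipped with a compatible $\A$-equivariant action of the group scheme $\N$ over $S$. Under this identification, $\Phi_{\G,X}$ is the composition of restriction along the section $\A\ltimes S\to \A\ltimes\N$ of the orbit-structure morphism (which collapses the $\N$-action to an action of its fiber $\G_x$ at $s$) with the Klyachko fiber-at-$s$ equivalence of Theorem~\ref{thm:Klyachko}.

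For the first two steps I would closely adapt the example. That the image of $\Phi_{\G,X}$ satisfies \textbf{(C)} is formal: by Proposition~\ref{prop:nsmooth}, $\Lie(\N)$ is an $\A$-equivariant vector bundle on $S$ whose fiber at $s$ is $\Lie(\G_x)$, so Theorem~\ref{thm:Klyachko} endows it with the $\Sigma(1)$-filtration appearing in \textbf{(C)}; the $\A$-equivariance of the $\N$-action on $W$ means the differentiated action $\Lie(\N)\otimes W\to W$ is a morphism of $\A$-equivariant vector bundles, and Klyachko's equivalence (together with its compatibility with tensor products) translates this morphism into condition \textbf{(C)}. For full faithfulness, let $S^o\subset S$ be the open $\A$-orbit of $s$. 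Density of $S^o$ in $S$ makes restriction faithful on $\Vec^\A(S)$, hence on $\Vec(\A\ltimes\N)$. Since the restricted groupoid $(\A\ltimes\N)|_{S^o}$ is transitive with stabilizer $\G_x$ at $s$, Example~\ref{ex:transitive} gives an equivalence $\Vec((\A\ltimes\N)|_{S^o})\isomto \Rep(\G_x)$, whence $\Phi_{\G,X}$ is faithful. For fullness, a filtration-preserving morphism in $\Filt^{\G_x,\Sigma(1)}(k)$ lifts via this equivalence to a morphism of $\N|_{S^o}$-modules over $S^o$; Theorem~\ref{thm:Klyachko} then guarantees that the underlying morphism of $\A$-equivariant vector bundles extends across $S$ because the filtrations are preserved, and density forces the extended morphism to intertwine the $\N$-actions.

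The hard part is essential surjectivity. Given $(V,\rho,\{F_\alpha^\bullet\})$ satisfying \textbf{(K)} and \textbf{(C)}, Theorem~\ref{thm:Klyachko} produces an $\A$-equivariant vector bundle $W$ on $S$ with fiber $V$ at $s$, and Example~\ref{ex:transitive} converts $\rho$ into an $\A$-equivariant $\N|_{S^o}$-action on $W|_{S^o}$. The task is to extend this action of $\N|_{S^o}$ to an action of all of $\N$ on $W$. I would first extend the infinitesimal action: the morphism of $\A$-equivariant vector bundles $\Lie(\N|_{S^o})\otimes W|_{S^o}\to W|_{S^o}$ extends across $S$ to a morphism $\Lie(\N)\otimes W\to W$ exactly when its filtered-linear-algebra avatar respects the filtrations for every $\alpha\in \Sigma(1)$, which is precisely \textbf{(C)}. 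Integrating this Lie-algebra action to a group-scheme action of $\N$ is where the main technical difficulty lies: Proposition~\ref{prop:nsmooth} (smoothness and connectedness of the fibers of $\N$) makes a relative Lie-theoretic integration possible in principle, but the fibers of $\N$ may change type across the strata of $S$ (in the example, $\N$ degenerates from $\gm$ on $S^o$ to $\ga$ over the origin), so one cannot simply apply a single-fiber integration. My strategy is an $\A$-equivariant formal-integration argument: on formal neighborhoods of each closed stratum, smoothness, connectedness, and $\A$-equivariance let one integrate the Lie-algebra action uniquely, and the given action on $S^o$ forces these local integrations to glue into a global action of $\N$ on $W$, yielding the desired lift.
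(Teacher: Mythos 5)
Your proposal follows the paper's own route essentially verbatim: the chain of equivalences $\A\ltimes\N\isomto\mathcal{Y}\to\mathcal{X}\to\G\ltimes X$, Klyachko's Theorem~\ref{thm:Klyachko} in place of the Rees description, and the same three steps (image satisfies \textbf{(C)}, full faithfulness via restriction to the dense open $\A$-orbit and Example~\ref{ex:transitive}, essential surjectivity by first extending the $\Lie(\N)$-action using \textbf{(C)} and then integrating). The paper likewise leaves the final integration step as a ``Lie-theory argument'' deferred to the detailed version, so your $\A$-equivariant formal-integration strategy is a reasonable stand-in at the same level of detail.
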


\section{Final Remarks}\label{s:remarks}
To close, we connect our results more precisely with those in the literature; at the same time, we give examples to show that the neutralizibility condition is both interesting and non-vacuous.  Given a toroidal spherical $(\G,\B)$-variety $(X,x)$, recall the neutralizability only depends on the homogeneous spherical variety $(X^o,x)$. For this reason, in most of the examples, we describe only a homogeneous spherical variety and its weight lattice. In the neutralizable cases, our results give descriptions of equivariant vector bundles on any toroidal spherical variety containing the given $(X^o,x)$ as its open $\G$-orbit. In the examples below, $k$ is an algebraically closed field of characteristic $0$. (In these specific cases, however, more care in the formulations would allow one to work without assuming $k$ to be algebraically closed.)

\begin{ex}
Let $V$ be an $(n+1)$-dimensional $k$-vector space, and let $V^{\vee}$ be its $k$-vector-space dual.  Consider the diagonal action of $\G = \PGL(V)$ on $X = {\mathbb P}(V) \times {\mathbb P}(V^{\vee})$. We write $(v,\lambda)$ for the bihomogeneous coordinates on $X$.
The closed subscheme of $X$ defined in bihomogeneous coordinates by $\lambda(v) = 0$ is $\G$-stable, and its complement $X^o$ is homogeneous for the action of $\G$.
Fix a complete flag $0 = V_0\subset \dotsb \subset V_{n+1}$ on $V$, and let $B$ be its stabilizer in $\G$. Let $x = (v,\lambda)\in X^o(k)$ with $\lambda(V_1)\neq 0$ and $v \not\in V_n$. Then $(X,x)$ is $(\G,\B)$-spherical.

Let $\B'$ be the stabilizer of the flag in $\GL(V)$. For $0\leq i\leq n$, let $\chi_i:\B'\to \gm$ be the character through which $B'$ acts on $V_{i+1}/V_i$. The characters $\chi_i$ form a basis for $\mathrm{X}^*(\B')$. We identify $\mathrm{X}^*(\B)$ with the subgroup of $\mathrm{X}^*(\B')$ composed of elements whose components with respect to the $\chi_i$ sum to $0$. The weight lattice $\Lambda$ of $(X,x)$ is then generated by $\chi_0 \chi_n^{-1}$. From this description, it is clear that $\Lambda$ is a direct summand of $\mathrm{X}^*(\B)$, and so $(X,x)$ is neutralizable. This class of neutralizable examples generalizes our motivating example, and our methods describe the category of $\mathrm{PGL}(V)$-equivariant vector bundles on $X$.
%
\end{ex}

\begin{ex}
Let $\H$ be a connected reductive group over $k$.  Let $\G = \H\times \H$, which acts on $\H$ by $(g,g').h\mapsto ghg'^{-1}$. Let $\T$ be a maximal torus of $\H$, and let $\B$ and $\B'$ be opposed Borel subgroups of $\H$ containing $\T$. Let $e\in \H(k)$ be the identity section. Then $(\H,e)$ is a $(\G, \B\times \B')$-spherical variety, as $\B.\B'$ is open in $\H$ because $\B$ and $\B'$ are opposed. The stabilizer of $e$ in $\G$ is $\H$ under its diagonal embedding. The intersection $\H\cap (\B\times \B')$ is the diagonal copy of $\T$, since $\B$ and $\B'$ are opposed. Since $\T\times \T$ splits as the product of the diagonal copy of $\T$ and $\T\times 1$, the spherical variety $(\H,e)$ is neutralizable, and our methods describe the category of equivariant vector bundles on any toroidal partial compactification of $(\H,e)$. In particular, we recover the results of Kato ({\em cf.} \cite{Kato}).
\end{ex}

\begin{ex}
Again, let $\G$ be a connected reductive group over $k$.  Let $\B$ be a Borel subgroup of $\G$, and let $\U$ be the unipotent radical of a Borel subgroup opposed to $\B$. Let $X = \G/\U$, and let $x\in X(k)$ correspond to the identity coset. Then $(X,x)$ is a $(\G,\B)$-spherical homogeneous space. The weight lattice of $(X,x)$ is the full character lattice of $\B$, since $\B\cap \U = \langle e \rangle$. Thus $(X,x)$ is neutralizable, and our methods describe the category of equivariant vector bundles on any toroidal partial compactification of $(X,x)$.
\end{ex}

\begin{nex}
It is easy to produce non-neutralizable toroidal spherical varieties. A simple case is $\gm$ acting on itself by the character $t\mapsto t^2$, for which the weight lattice of the spherical variety has index $2$ in the character lattice of the group. Although this spherical variety is not neutralizable, a simple variant on the Rees construction (Theorem \ref{thm:rees}) provides a description of the equivariant vector bundles on partial compactifications. Another similar example is a variant on the example of \S\ref{s:example}. Let $\mathrm{N}$ be the normalizer of the $\T$ from \S\ref{s:example} in $\G = \PGL_2$. Let $X = \G/\mathrm{N}$, and let $x\in X(k)$ correspond to the identity coset. Then for $\B$ as in \S\ref{s:example}, one finds that $(X,x)$ is $(\G,\B)$-spherical. The weight lattice of $(X,x)$ has index $2$ in the character lattice of $\B$, and so $(X,x)$ is not neutralizable. Nonetheless, one can still describe the equivariant vector bundles on partial compactifications using our techniques using the modified Rees construction.

More generally, one can analyze many non-neutralizable examples using a modified Klyachko construction and our semi-direct product method. We do not have a systematic description of the scope of this more-general approach.
\end{nex}

\begin{footnotesize}
\bibliographystyle{alpha}
\bibliography{announcement}
\end{footnotesize}
\end{document}